\newcommand{\mathsep}{,~}
\newcommand{\st}{\,\middle|\,}
\newcommand{\set}[1]{\left\lbrace #1 \right\rbrace}
\newcommand{\card}[1]{\left\lvert{#1}\right\rvert}
\newcommand{\absv}[1]{\card{#1}}
\newcommand{\norm}[2]{\left\lVert{#1}\right\rVert_{#2}}
\newcommand{\setR}{\mathbb R}
\newcommand{\setN}{\mathbb N}
\newcommand{\pb}[1]{\mathbb P\left[#1\right]}
\newcommand{\pdf}[1]{{\bf p}\left[#1\right]}
\DeclareMathOperator*{\argmin}{arg\,min}
\newtheorem{definition}{Definition}
\newtheorem{proposition}{Proposition}
\newtheorem{theorem}{Theorem}
\newtheorem{lemma}{Lemma}
\newtheorem{corollary}{Corollary}
\newtheorem{assumption}{Assumption}
\newcommand{\Loss}{\textsc{Loss}}
\newcommand{\sigmoid}{\textsc{Sigmoid}}
\newcommand{\cA}{\mathcal A}
\newcommand{\cB}{\mathcal B}
\newcommand{\cC}{\mathcal C}
\newcommand{\cD}{\mathcal D}
\newcommand{\cR}{\mathcal R}
\newcommand{\cU}{\mathcal U}
\newcommand{\bA}{{\bf A}}
\newcommand{\bD}{{\bf D}}
\DeclareMathOperator{\proj}{proj}
\title{On Monotonicity in AI Alignment}
\author{%
  Gilles Bareilles\thanks{Equal contribution. Correspondance to gilles.bareilles@fel.cvut.cz, julien.fageot@gmail.com, len@calicarpa.com} \\
  CTU in Prague, Tournesol \\
  \And
  Julien Fageot${}^{*}$ \\
  Tournesol \\
  \And
  Lê-Nguyên Hoang${}^{*}$ \\
  Calicarpa, Tournesol \\
  \AND
  Peva Blanchard\thanks{Equal contribution.} \\
  Kleis Technology\\
  \And
  Wassim Bouaziz${}^{\dagger}$ \\
  École Polytechnique \\
  \And
  Sébastien Rouault${}^{\dagger}$ \\
  Calicarpa  \\
  \AND
  El-Mahdi El-Mhamdi \\
  École Polytechnique \\
}
\begin{document}
\maketitle

\begin{abstract} 
Comparison-based preference learning has become central 
to the alignment of AI models with human preferences. 
However, these methods may behave counterintuitively.
After empirically observing that, 
when accounting for a preference for response $y$ over $z$, 
the model may actually decrease the probability (and reward) of generating $y$
(an observation also made by others),
this paper investigates the root causes of (non) monotonicity,
for a general comparison-based preference learning framework 
that subsumes Direct Preference Optimization (DPO), 
Generalized Preference Optimization (GPO) 
and Generalized Bradley-Terry (GBT). 
Under mild assumptions, 
we prove that such methods still satisfy what we call \emph{local pairwise monotonicity}.
We also provide a bouquet of formalizations of monotonicity,
and identify sufficient conditions for their guarantee,
thereby providing a toolbox to evaluate how prone learning models are to monotonicity violations. 
These results clarify the limitations of current methods and 
provide guidance for developing more trustworthy preference learning algorithms. 
\end{abstract}

\section{Introduction}
\label{sec:introduction}

Large AI models and large language models (LLMs) in particular now power an ever‑growing range of user‑facing applications, from conversational assistants to code‑completion systems, and their societal impact expands with every deployment. Ensuring that these models behave in accordance with human preferences has therefore become a defining challenge. Comparison‑based preference learning, in which annotators rank or choose among candidate outputs and the model is fine‑tuned to reproduce those choices, has emerged as the workhorse paradigm for alignment. Although simple to describe and remarkably effective in practice, this paradigm conceals subtle theoretical pitfalls that undermine our ability to reason about, and ultimately trust, the models it produces.

The most widely used framework for comparison-based preference learning is Reinforcement Learning from Human Feedback (RLHF)\cite{DBLP:conf/nips/ChristianoLBMLA17,DBLP:conf/nips/StiennonO0ZLVRA20}, which in practice often reduces to Direct Preference Optimization (DPO)\cite{dpo} or its recent generalizations~\cite{DBLP:conf/icml/TangGZCMRRVPP24,DBLP:conf/aistats/AzarGPMRVC24,fageotGeneralizedBradleyTerryModels2024}. The core intuition behind these methods is straightforward: if a human prefers response $y$ over response $z$, the fine-tuned model should boost the likelihood of $y$ and suppress that of $z$. However,
perhaps surprisingly, recent empirical work has shown that this intuition can fail in practice. In some cases, fine-tuning on a preference pair where $y$ beats $z$ actually reduces the model’s probability or logit score for $y$~\cite{DBLP:journals/corr/abs-2402-13228,DBLP:journals/corr/abs-2410-08847}. Such counterintuitive properties raise serious concerns: they erode trust in the training procedure, complicate the design of data-collection protocols, and may even incentivize annotators to misreport their true preferences, in high-stakes applications.
These phenomena call for a fundamental question:

\begin{center}
    \emph{What monotonicity guarantees do comparison-based preference learning algorithms provide?}
\end{center}

In this paper, we provide the first systematic study of monotonicity for a broad class of comparison-based preference learning methods, 
which includes DPO, Generalized Preference Optimization (GPO), and Generalized Bradley-Terry (GBT). 
More specifically, our contributions are:

\begin{itemize}
    \item We document an empirical setting 
    where individual gradient-descent monotonicity fails.
    \item We formalize a rich variety of flavors of \emph{monotonicity},
    structured around various considerations 
    (pairwise/individual, local/global, score/probability, minimum/gradient-descent).
    \item We prove that,
    a general comparison-based preference learning framework,
    which includes DPO, GPO and GBT,
    guarantees \emph{local pairwise monotonicity}.
    \item We identify sufficient conditions for, global pairwise,
    local individual-score,
    local individual-probability
    gradient-descent pairwise,
    gradient-descent individual-score
    and gradient-descent individual-probability monotonocity.
\end{itemize}

The rest of the paper is organized as follows.
Section~\ref{sec:context} reviews related work.
Section~\ref{sec:experiments} motivates our research, 
by exhibiting an empirical setting where monotonicity fails.
Section~\ref{sec:model} introduces a general comparison-based preference learning framework that generalizes the most leading solutions.
Section~\ref{sec:monotonicity} presents our main result, on \emph{local pairwise monotonicity}.
Section~\ref{sec:secondary_results} discusses other forms of monotonicity.
Section~\ref{sec:conclusion} concludes.

\begin{figure}
    \centering
    \includegraphics[width=\linewidth]{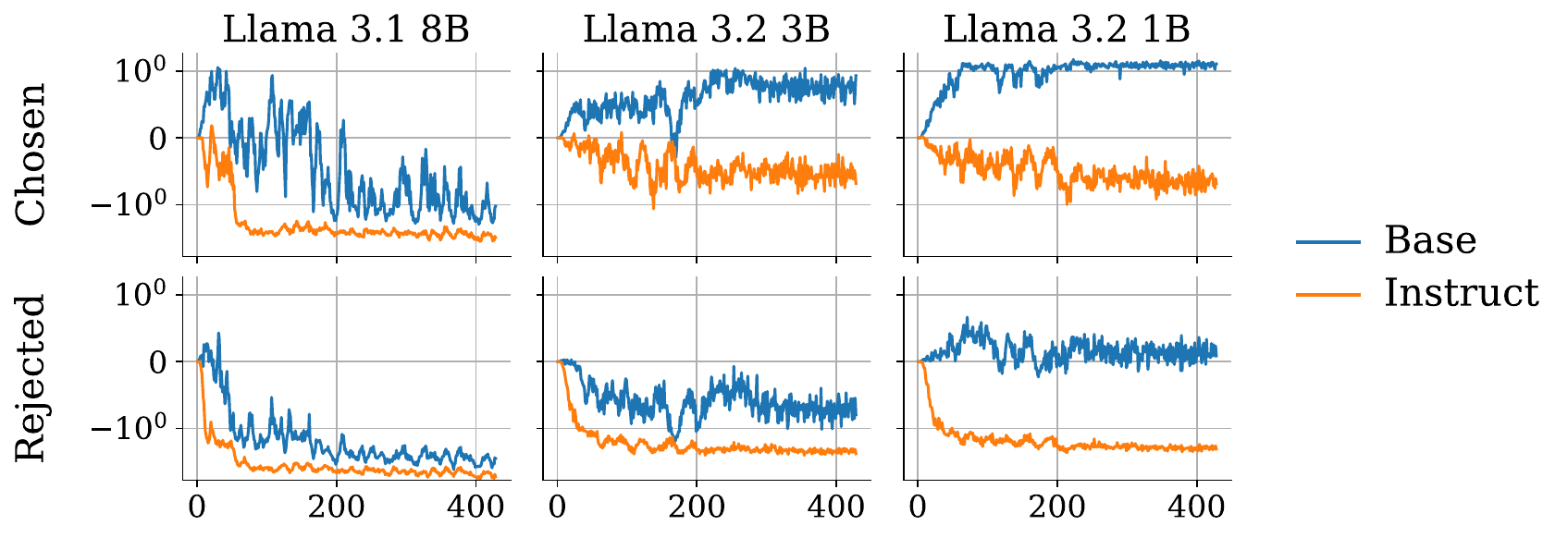}
    \caption{For each model Llama, at each step, 
    we report the difference of scores, before and after the gradient step,
    of, respectively, the chosen and the rejected response.
    One could expect the chosen-response curves to be above zero, 
    and others to be below zero. This is not the case.}
    \label{fig:experiment-chosen-rejected}
\end{figure}

\section{Context and Motivations}
\label{sec:context}

\paragraph{The Bradley-Terry model and its generalizations.}
Comparison-based preference learning builds upon a large literature,
which started with the seminal works of Thurstone~\cite{thurstone27}, 
Zermelo~\cite{zermelo1929berechnung},
and then Bradley and Terry~\cite{bradley1952rank}.
Their solution relies on a probabilistic model of 
how some ground-truth preference gets distorted into reported comparative judgments,
thereby enabling preference learning from inconsistent data.
Their model was later generalized by~\cite{luce1959individual} and \cite{plackett1975analysis}
to account for the selection of one preferred alternative out of many,
by~\cite{DBLP:journals/corr/abs-1911-11658} and \cite{fageotGeneralizedBradleyTerryModels2024}
to enable quantified comparative judgments,
and by~\cite{DBLP:journals/nca/MenkeM08},
~\cite{DBLP:conf/ijcai/GuoTKOCCEDI18},
~\cite{DBLP:conf/aaai/NoothigattuGADR18} and \cite{DBLP:journals/pacmhci/LeeKKKYCSNLPP19}
to learn linear models of preferences, 
and thus generalize preference learning beyond the specific compared items.

\paragraph{Nonlinear models with a Bradley-Terry loss.}
\cite{csiszar2012algorithms} and \cite{zhao2016deep} are some of the earliest nonlinear models
whose loss functions are constructed based on comparative judgments
and on the Bradley-Terry loss.
More recently, 
with the rise of language models~\cite{vaswani2017attention,brown2020language} 
and of the alignment problem~\cite{hadfield2019incomplete,DBLP:conf/aaai/Hoang19},
the Bradley-Terry loss was proposed to fine-tune language models 
to reported comparative human judgments,
e.g. through the convoluted \emph{Reinforcement Learning with Human Feedback} (RLHF)~
\cite{DBLP:conf/nips/ChristianoLBMLA17,DBLP:conf/nips/StiennonO0ZLVRA20}.
This approach was later shown to be reducible to 
\emph{Direct Preference Optimization} (DPO)~\cite{dpo},
where model fine-tuning boils down to minimizing a Bradley-Terry-derived loss function
of the language model parameters.
Lately, alternative loss functions were proposed,
which typically replace the Bradley-Terry loss with an alternative term~
\cite{DBLP:conf/icml/TangGZCMRRVPP24,DBLP:conf/aistats/AzarGPMRVC24}.
The global preference-learning framework has also been used for other use cases, 
like image captioning~\cite{DBLP:journals/corr/abs-2403-06735}
and policy tuning~\cite{DBLP:journals/corr/abs-2310-13639},
as well as
image~\cite{DBLP:conf/cvpr/LiangHLLKCSPYYK24,DBLP:conf/nips/LiuZL0ZH24}
sound~\cite{DBLP:conf/nips/ZhangLLZWZQ24}
and video generation~\cite{DBLP:conf/nips/DaiCWYCJ024}.

\paragraph{Monotonicity.}
While RLHF and DPO have by now been widely used to align language models,
little is known about their actual mathematical guarantees.
For instance, recently, 
\cite{DBLP:conf/nips/ChenMZ0ZRC24} pointed out that order often failed to be recovered
by preference learning algorithm.
More strikingly, \cite{DBLP:journals/corr/abs-2402-13228,DBLP:journals/corr/abs-2410-08847} made observations akin to ours,
as they also witness a decrease of the probability of the preferred alternative,
after including the comparison that says that it is preferred in gradient descent.
In fact, there is a growing literature on fixes to the DPO loss~\cite{DBLP:conf/nips/PangYHCSW24,DBLP:conf/nips/LiuLZ0GYB024}.
Our paper's approach most resembles that of~\cite{fageotGeneralizedBradleyTerryModels2024},
as we study the monotonicity of the loss minimum,
upon the addition or modification of a reported comparative judgment.
We believe this to yield a complementary, and perhaps more fundamental, insight 
than the study of gradient descent.

\paragraph{Experiments.}
\label{sec:experiments}
We replicated the findings previous,
by experimenting with 6 Llama models (3.1 8B, 3.2 3B, 3.2 1B, both \emph{base} and \emph{instruct} variants)~\cite{llama3modelcard} and UltraFeedback~\cite{cui2024ultrafeedbackboostinglanguagemodels}.
We used torchtune~\cite{torchtune} with a modified ``\texttt{full\_dpo\_distributed}'' \emph{recipe} (provided in the Supplementary Material).
Our experiments ran on a compute node of 8 H100, for less than 100 GPU-hour.
Figure~\ref{fig:experiment-chosen-rejected} shows no guarantee of monotonicity.
Namely, the scores of the chosen response may increase or decrease,
while the score of the rejected response may also increase or decrease.
It is noteworthy that the base model tends to respect monotonicity more than the instruct model does,
though this observation is far from robust.
Such puzzling results call for a theory of monotonicity.

\section{Model}
\label{sec:model}

In this section, we introduce a very general comparison-based preference learning framework.
We show that it includes the most celebrated instances,
including Bradley-Terry (BT), Generalized Bradley-Terry (GBT), 
Bradley-Terry-based linear models, 
Direct Preference Optimization (DPO) and General Preference Optimization (GPO).

Consider a set $\cA$ of alternatives to be scored.
We assume that their scoring is dependent on a background $\cB$.
Typically, in the context of language model alignment,
$\cB$ would be the set of prompts and $\cA$ would be the set of responses to the prompt.
Denote $s : \cA \times \cB \times \setR^D \to \setR$
the parameterized scoring function to be learned,
where $s_{y|x} (\theta) \in \setR$ is the score 
assigned to alternative $y \in \cA$ to background $b \in \cB$
for a parameter vector $\theta \in \setR^D$.

The parameter vector $\theta$ is typically learned by fitting
a comparison-based preference multiset 
$\bD \triangleq \left( \cB \times \cA \times \cA \times \cC \right)^*$
composed of a finite number of conditional pairwise response comparisons $(x, y, z, c)$,
where $x \in \cB$ is the background (e.g. prompt),
$y, z \in \cA$ are proposed alternatives (e.g. responses) to $x$,
and $c \in \cC \subset \setR$ says 
whether $y$ was preferred over $z$ ($c > 0$), or $z$ was preferred over $y$ ($c < 0$).
Typically, assuming binary comparisons,
we would have $\cC \triangleq \set{-1, +1}$, 
with $c = 1$ if $y$ was preferred to $z$,
and $c = -1$ otherwise.

To fit $\theta$ to $\bD$, we assume that a loss is minimized.
Denoting $s_{yz|x} (\theta) \triangleq s_{y|x} (\theta) - s_{z|x} (\theta)$
the score difference between responses $y$ and $z$ on prompt $x$,
we consider the following general loss form:
\begin{align}
    \Loss (\theta | \bD)
    &= \cR(\theta) + \sum_{(x, y, z, c) \in \bD} \ell(s_{yz | x} (\theta), c),
\end{align}
where $\cR : \setR^D \to \setR$ is a (potentially nil) regularization
and $\ell: \setR \times \cC \to \setR$ is the loss per data point.

In the sequel, we show that our setting generalizes most state-of-the-art solutions
for comparison-based preference learning,
which are obtained by instantiating different scoring functions $s$
and different per-data losses $\ell$.
Note however that some models escape our formalism, 
e.g.~\cite{DBLP:journals/corr/abs-2402-13228,DBLP:conf/nips/XiaoYZLH24,DBLP:conf/nips/0001X024} 
whose losses also depend on $s_{y|x} (\theta)$ or $\pi_\theta(y|x)$, 
and not just on the score difference.

\subsection{Variants of the scoring function $s$}

\paragraph{One-hot encoding.}
The simplest instantiation of $s$ simply corresponds 
to a parameter vector $\theta \in \setR^{\cB \times \cA}$,
and $s_{y|x} (\theta) \triangleq \theta_{xy}$.
This corresponds to one-hot encoding,
as it can be rewritten $s_{y|x} (\theta) \triangleq \theta^T (e_x \otimes e_y)$,
where $e_x$ and $e_y$ are the vector of the canonical bases of $\setR^{\cB}$ and $\setR^{\cA}$.
Unfortunately, however, one-hot encoding fails to perform generalization.
Namely, the knowledge that $y$ has high score under $x$
does not affect the scoring of $y'$ under $x$,
even if $y'$ is known to be very similar to $y$.
Additionally, in applications like language model fine-tuning 
where $\cB$ or $\cA$ are combinatorially large,
one-hot encoding requires an exponential number of parameters, which is impractical.

\paragraph{Linear model.}
A more common scoring function of $s$ in machine learning involves a linear model.
To do so, we first consider a fixed embedding map 
$f: \cB \times \cA \to \setR^D$.
The score is then a linear function of the embedding, i.e.
$s_{y|x} (\theta) = \theta^T f(x,y)$.
This is, to a certain extent, what is performed in the context of
Reinforcement Learning with Human Feedback (RLHF),
where the score (also known as reward) is constructed as a linear function of an embedding.
However, note that this is only one step of RLHF, 
which also involves policy optimization given a scoring function.

\paragraph{Language models.}
For language models,
we have $\cA = \cB = \bA^* \triangleq \bigcup_{n \in \setN} \bA^n$,
i.e. both the alternatives and the background are finite sequences of characters
of a finite alphabet $\bA$.
The scoring function then assigns a score $s_{y|x} (\theta) \in \setR$
to any response (alternative) $y \in \cA$
under a prompt (background) $x \in \cB$.
It typically corresponds to the last layer of the language model,
before a softmax operator is applied to derive a probability distribution over $\cA$,
i.e. it is common to set
\begin{align}
    \pi_\theta (y|x)
    \triangleq \frac{\exp(s_{y|x}(\theta))}{
        \sum_{z \in \bA^*} \exp(s_{z|x}(\theta))
    },
\end{align}
where $\pi_\theta (y|x)$ is the probability of response $y$ under prompt $x$.
If so, the scores $s_{y|x} (\theta)$ are known as the \emph{logits} of the generative model.

\paragraph{Direct Preference Optimization (DPO).}
In Direct Preference Optimization (DPO),
which is an equivalent more direct reformulation of RLHF,
a reference model $\pi_{ref} : \bA^* \to \Delta(\bA^*)$ is used
to bound the variations of the scores.
The score $s_{y|x} (\theta)$ 
to response $y$ conditionally to prompt $x$ assuming model $\theta$
is then given by%
\begin{align}
  s_{y|x} (\theta)
  = \beta \log \frac{\pi_\theta(y | x)}{\pi_{ref} (y | x)} + \beta \log Z_x(\theta),
\end{align}
where $Z_{x}(\theta) = \sum_{y} \pi_{ref}(y | x) \exp(\beta^{-1}s_{y|x}(\theta)) $ 
is the partition function of $\pi_{\theta}(\cdot|x)$, 
and $\beta \in \setR_{\geq 0}$ is a positive scalar hyperparameter.
Note that $s_{y|x} (\theta)$ is here often known as the \emph{reward}.

\paragraph{}
In all these cases, $s_{y|x}$ is often assumed to be differentiable, if not smooth\footnote{
Modern language models typically consider the smooth Sigmoid Linear Unit (SiLU) function as an activation function,
instead of, say, ReLU.
}.
In the sequel, we will assume that it is continuously differentiable.

\begin{assumption}
\label{ass:s_c1}
    For all $x \in \cB$ and $y \in \cA$, the function $s_{y|x}: \setR^D \to \setR$ is continuously differentiable.
\end{assumption}

\subsection{Variants of the loss function $\ell$}

\paragraph{Bradley-Terry (BT).}
In DPO, and many other comparison-based preference learning models,
the probability that $y$ is preferred to $z$ is then given 
by the classical Bradley-Terry model~\cite{bradley1952rank}, i.e.
\begin{align}
  \pb{c = 1 | x, y, z, \theta}
  \triangleq \sigmoid \left( s_{yz|x} (\theta) \right), 
  \quad
  \pb{c = -1 | x, y, z, \theta}
  \triangleq \sigmoid \left( -s_{yz|x} (\theta) \right),
\end{align}
where $\sigmoid(t) \triangleq 1/(1+e^{-t})$ is the sigmoid function
and $s_{yz|x} (\theta) \triangleq s_{y|x} (\theta) - s_{z|x} (\theta)$
is the score difference between responses $y$ and $z$.
Assuming that the prompts and answers $x$, $y$ and $z$ are independent from $\theta$,
the negative log-likelihood then defines a loss $\ell$, 
up to a constant independent from $\theta$,
which is given by
$
  \ell (s_{yz|x}(\theta), c)
  \triangleq - \log \pb{c | x, y, z, \theta}
  = - \log \sigmoid \left( c s_{yz|x} (\theta) \right)$.
Or to put it more straightforwardly, we have 
\begin{align}
    \ell(s, c) = - \log \sigmoid (cs).
\end{align}

    Note that minimizing the above loss for the simplest dataset $\bD = (x, y, z, 1)$, 
    amounts to maximizing $\sigmoid(s)$.
    Since the sigmoid function is increasing,
    this corresponds to high values of $s$.
    In the DPO setting,
    one recovers that this favors increasing $\pi_{\theta} (y | x)$ and decreasing $\pi_{\theta} (z | x)$.

\paragraph{Generalized Bradley-Terry.}
The DPO and Bradley-Terry models handle ``binary'' comparisons, namely $c=1$ or $c=-1$.
In many situations though, one can say whether $y$ is preferable to $z$, but also by \emph{how much}.
\cite{fageotGeneralizedBradleyTerryModels2024} proposed a family of Generalized Bradley-Terry (GBT) models,
that allow including quantified comparisons $c \in \cC$,
where $\cC \subset \setR$ is symmetric with respect to $0$; 
typically, $\cC = [ -1, 1 ]$ or $\cC = \setR$.
Given a score difference $s_{yz|x}$,
a GBT model induces the following distribution of comparisons $c$:
\begin{align}
  \pdf{c | x, y, z, \theta}
  \triangleq \frac{
    f(c) \exp \left(c s_{yz|x} (\theta) \right)
  }{
    \int_\cC f(\gamma) \exp \left(\gamma s_{yz|x} (\theta) \right) d\gamma
  },
\end{align}
where $f$ is a ``root law'' distribution over $\cC$ that characterizes the GBT model.
Note that the classical Bradley-Terry model is recovered by setting $\cC = \set{-1, +1}$ and $f = (\delta_{-1} + \delta_{1}) / 2$, where $\delta_{p}$ denotes the Dirac distribution at $p$.
From this we can derive the loss 
$\ell (s_{yz|x}(\theta), c) \triangleq - \log \pdf{c | x, y, z, \theta} + cst$ 
as the negative log-likelihood of the data (up to a constant), 
we obtain
\begin{align}
  \ell(s, c) = \Phi_f \left( s \right) - c s,
\end{align}
where $\Phi_f(s) = \log \int_\cC e^{s\gamma} f(\gamma) d\gamma$
is the cumulant-generating function of the root law $f$.

\paragraph{Uniform-GBT.}
For $\cC = [-1, 1]$ and $f^{\text{unif}} = 1_{[1, 1]} / 2$,
the loss is given by
\begin{align}
  \ell (s, c)
  = \log \frac{ \sinh \left( s \right) }{s} - c s.
\end{align}

\paragraph{Gaussian-GBT.}
Another interesting case is $\cC = \setR$ with $f(c) = \exp(-c^2/2)$,
which corresponds to a normally distributed root law,
which then yields
\begin{align}
  \ell (s, c)
  = \frac{1}{2} s^2 - c s
  = \frac{1}{2} (s - c)^2 - \frac{1}{2}c^2.
\end{align}
Up to a multiplicative rescaling of the scores, 
this corresponds to the variant of DPO introduced by~\cite{gaussian_dpo},
where $c$ is obtained through a willingness-to-pay mechanism.
We refer to \cite{fageotGeneralizedBradleyTerryModels2024} 
for a table of values of $\Phi_f$ for different root laws $f$.

\paragraph{GPO losses.}
Note that our formulation generalizes General Preference Optimization (GPO)~\cite{DBLP:conf/icml/TangGZCMRRVPP24},
which propose numerous other expressions for the loss $\ell$.
As they only consider binary comparisons, 
they write their function $\ell(s, 1) = \ell_0(s)$, with $\ell(s, -1) = \ell_0(-s)$.
Various forms of $\ell_0$ are considered, 
including $\ell_0 = - \log \sigmoid$ (DPO~\cite{dpo}),
$\ell_0(s) = \max(0, 1 - s)$ (SLiC~\cite{zhao2023slic}),
and $\ell_0(s) = (1-s)^2$ (IPO~\cite{DBLP:conf/aistats/AzarGPMRVC24}).
\cite{DBLP:conf/nips/0001HFCFSL24} automatically searched and found more examples.

\section{Pairwise Monotonicity}
\label{sec:monotonicity}

In this section, we formalize \emph{pairwise monotonicity},
and we essentially prove that all models that are instances of our general framework
guarantee \emph{local} \emph{pairwise monotonicity}.

\subsection{Defining monotonicity}

Intuitively, monotonicity holds if,
whenever a preference for response $y$ over $z$ is reported,
the model trained with this preference will improve the scoring of $y$ over $z$.
However, precisely formulating this intuition raises a few issues.

First, different statistics of the language models may be tracked to evaluate monotonocity.
Some papers~\cite{DBLP:journals/corr/abs-2402-13228,DBLP:journals/corr/abs-2410-08847} previously looked at the probability $\pi_{\theta}(y|x)$ 
of generating the preferred response given $x$.
This may be called \emph{individual-probability monotonicity}.
One could also be interested to look at the individual score variations: 
increase of $s_{y|x}(\theta)$ and decrease of $s_{z|x}(\theta)$.
We may call this criterion \emph{individual-score monotonicity}.
We will discuss these notions later on, and will show that they do not hold in general.
In this section, we rather focus on the difference of scores 
$s_{yz|x}(\theta) = s_{y|x}(\theta) - s_{z|x}(\theta)$
between the preferred and the less preferred responses.
We call this \emph{pairwise monotonicity}.
Assuming that scores are the logits of the generation probabilities, 
pairwise monotonicity then implies a monotonicity of probability ratios, as
\begin{align}
    s_{yz|x} (\theta^{(2)}) \geq s_{yz|x} (\theta^{(1)})
    ~\Longleftrightarrow~
    \frac{\pi_{\theta^{(2)}}(y|x)}{\pi_{\theta^{(2)}}(z|x)} \geq \frac{\pi_{\theta^{(1)}}(y|x)}{\pi_{\theta^{(1)}}(z|x)}.
\end{align}

Second, monotonicity may be measured either 
with respect to an intensification of a comparison,
or to the addition of a unequivocal comparison. 
The former will be the subject of Section~\ref{sec:comparison_intensification},
while the latter will be that of Section~\ref{sec:unequivocal_comparison}.

Third, in the general case, 
it is unclear what it means for a language model to learn 
from the addition of a data in its dataset,
especially if the loss function has multiple minima.
To mitigate this concern,
we only consider infinitesimal deviations from a strict minimum,
with a positive definite Hessian loss.
In particular, we only consider the addition of a comparison
with an infinitesimal weight. 
This yields what we call \emph{local} monotonicity.
This scenario is arguably not far from practice 
given the number of data points used for training these models.

\subsection{Pairwise monotonocity when adding a unequivocal comparison}
\label{sec:unequivocal_comparison}

In this section, we assume that $\cC$ is bounded, hence has a maximum. 
This typically includes the settings where $\cC$ is finite like Bradley-Terry, DPO and GPO,
as well as GBT with a uniform root law on an interval or on a finite set,
among many others possibilities.
We then consider adding a small-weight data to $\bD$, 
by defining
$\bD' \triangleq \bD \cup \varepsilon \set{(x, y, z, \max \cC)}$,
where $\bD'$ now has $N+1$ data, 
the last of which being $(x, y, z, \max \cC)$ with a weight $\varepsilon$ when it appears in $\Loss$.
Formally,
\begin{align}
    \Loss(\theta | \bD') \triangleq \Loss(\theta | \bD) + \varepsilon \ell(s_{yz|x}(\theta), \max \cC).
\end{align}

\begin{definition}
  A preference learning model is \emph{locally pairwise monotone} 
  at dataset $\bD$ and parameters $\theta^* \in \setR^D$ 
  for the addition of a unequivocal comparison $(x, y, z, \max \cC)$, if
  it is based on minimizing a loss function $\Loss$ and if
  there exists a neighborhood $\cU$ of $\theta^*$ and $\varepsilon_0 > 0$ such that,
  for all $x, y, z \in \bA^*$ and
  for all $0 \leq \varepsilon \leq \varepsilon_0$,
  \begin{equation}
    \forall \theta^\varepsilon \in \argmin_{\theta \in \cU} 
       \Loss ( \theta | \bD \cup \varepsilon \set{(x, y, z, \max \cC)}) 
       \mathsep
    s_{yz|x} (\theta^{\varepsilon}) \geq s_{yz|x} (\theta^*) 
  \end{equation}
\end{definition}

Intuitively, for local pairwise monotonicity to hold,
a maximal comparison must push for larger score differences between $y$ and $z$.
Formally, this amounts to the following.

\begin{assumption}
    \label{ass:max_comparison}
    The loss $\ell: \setR \times \cC \to \setR$ 
    is twice continuously differentiable in its first variable,
    and so is the regularization $\setR$.
    Moreover, the set $\cC$ has a maximum and 
    $\partial_s \ell(s, \max \cC) < 0$
    for all $s \in \setR$.
\end{assumption}

Some versions of GPO do not verify Assumption~\ref{ass:max_comparison},
in particular for SLiC (not twice continuously differentiable)
and for IPO (where saying that $y$ is preferred over $z$ pulls the score difference towards 1,
even if the score difference would otherwise be larger than 1).
However, the assumption holds for the classical Bradley-Terry model,
and more generally, for all generalized Bradley-Terry models with a maximal comparison.

\begin{proposition}
\label{prop:gbt_max_comparison}
    Assume that $\cC$ has a maximum
    and that $\ell$ is derived from the Generalized Bradley-Terry model:
    there exists a root law $f: \cC \to \setR_{\geq 0}$ such that
    $\ell(s, c) = \Phi_f(s) - cs$.
    Then $\partial_s \ell(s, \max \cC) < 0$ for all $s \in \setR$.
\end{proposition}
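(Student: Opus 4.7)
The plan is to reduce the inequality to a simple fact about the cumulant-generating function $\Phi_f$ of the root law and interpret it probabilistically. Since $\ell(s, c) = \Phi_f(s) - cs$, differentiating in the first argument gives
\begin{equation*}
  \partial_s \ell(s, \max\cC) \;=\; \Phi_f'(s) - \max\cC,
\end{equation*}
so the whole statement is equivalent to showing $\Phi_f'(s) < \max\cC$ for every $s \in \setR$. This is the sole claim I would then establish.

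To get a handle on $\Phi_f'(s)$, I would differentiate under the integral sign in $\Phi_f(s) = \log \int_\cC e^{s\gamma} f(\gamma)\,d\gamma$. Because $\cC$ has a maximum and hence is bounded (at least above; and by the symmetry of $\cC$ assumed earlier, bounded below as well, so $\cC$ is bounded), the integrand $\gamma \mapsto \gamma e^{s\gamma} f(\gamma)$ is dominated uniformly in $s$ on any compact $s$-neighborhood, so differentiation under the integral is legitimate and yields
\begin{equation*}
  \Phi_f'(s) \;=\; \frac{\int_\cC \gamma\, e^{s\gamma} f(\gamma)\, d\gamma}{\int_\cC e^{s\gamma} f(\gamma)\, d\gamma} \;=\; \expectVariable{\gamma \sim p_s}{\gamma},
\end{equation*}
where $p_s$ is the exponentially tilted probability measure on $\cC$ with density proportional to $e^{s\gamma} f(\gamma)$ with respect to the reference measure implicit in the root-law formulation (including the case where $f$ is a finite combination of Diracs, as in classical Bradley--Terry).

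From here the conclusion is immediate in spirit: since $p_s$ is supported in $\cC$, we have $\gamma \leq \max\cC$ almost surely under $p_s$, hence $\expectVariable{p_s}{\gamma} \leq \max\cC$, with equality if and only if $p_s$ is a Dirac mass at $\max\cC$. Because the tilting factor $e^{s\gamma}$ is strictly positive on all of $\cC$, the tilted measure $p_s$ has the same support as $f$ itself, so it is a point mass at $\max\cC$ only if $f$ already is. I would therefore complete the proof by invoking the implicit non-degeneracy assumption on $f$ (namely, that the root law assigns positive mass to some $\gamma_0 < \max\cC$; otherwise the GBT model is degenerate, with all comparisons identically equal to $\max\cC$, and the statement is vacuous).

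The main obstacle I anticipate is purely bookkeeping, around the fact that the root law is allowed to be a general measure (densities on intervals in the uniform-GBT case, Diracs in the Bradley--Terry case). The cleanest way is to write everything in measure-theoretic language from the start, so that both the justification of differentiation under the integral sign and the support argument for strictness $\expectVariable{p_s}{\gamma} < \max\cC$ apply uniformly to all examples discussed earlier; no real analytical difficulty arises beyond that.
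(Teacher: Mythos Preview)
Your proposal is correct and follows essentially the same approach as the paper: both compute $\partial_s \ell(s,\max\cC)=\Phi_f'(s)-\max\cC$ and then argue $\Phi_f'(s)<\max\cC$. The only difference is that the paper dispatches this last inequality by citing the known fact that $\Phi_f'$ is a strictly increasing odd bijection from $\setR$ onto $(\min\cC,\max\cC)$, whereas you supply the self-contained tilted-expectation argument that underlies that fact.
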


\begin{proof}
    The loss of the GBT model with root law $f$ is $\ell(s,c) = \Phi_f(s) - cs$, 
    hence 
    $\partial_s \ell(s, \max \cC) = \Phi'_f(s) - \max \cC$. 
    The derivative of the cumulant generative function is known to be a strictly increasing odd bijection from $\mathbb{R}$ to $(\min \cC, \max \cC)$ \cite[Theorem 1]{fageotGeneralizedBradleyTerryModels2024}.
    Hence, $\Phi'_f(s) - \max \cC < 0$.
\end{proof}

\begin{theorem}
  \label{th:monotonicity_unequivocal}
  Consider a preference learning model 
  that meets Assumption~\ref{ass:s_c1} and Assumption~\ref{ass:max_comparison}, and a dataset $\bD$.
  Let $\theta^* \in \setR^D$ and $(x, y, z) \in \cB \times \cA \times \cA$.
  If
  $\nabla \Loss (\theta^* | \bD) = 0$,
  $\nabla^2 \Loss (\theta^* | \bD) \succ 0$
  and $\nabla s_{zy|x} (\theta^*) \neq 0$,
  then $\Loss$ is locally pairwise monotone at $\bD$ and $\theta^*$
  for the addition of the unequivocal comparison $(x, y, z, \max \cC)$.
\end{theorem}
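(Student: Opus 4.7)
The core idea is to parametrize the local minimizer as a smooth curve $\varepsilon \mapsto \theta^\varepsilon$ via the implicit function theorem, then show that the directional derivative of $s_{yz|x}(\theta^\varepsilon)$ at $\varepsilon=0$ is strictly positive; local monotonicity then follows for small $\varepsilon$.

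\textbf{Step 1: Implicit curve of minimizers.} Define the perturbed loss $F(\theta, \varepsilon) = \Loss(\theta \mid \bD) + \varepsilon \, \ell(s_{yz|x}(\theta), \max \cC)$. By Assumptions~\ref{ass:s_c1} and~\ref{ass:max_comparison}, $F$ is $C^2$ in $\theta$ and $C^1$ jointly. At $(\theta^*, 0)$, we have $\nabla_\theta F = \nabla \Loss(\theta^* \mid \bD) = 0$ and $\nabla^2_\theta F = \nabla^2 \Loss(\theta^* \mid \bD) \succ 0$, which is in particular invertible. The implicit function theorem then yields $\varepsilon_0 > 0$ and a $C^1$ curve $\varepsilon \mapsto \theta^\varepsilon$, with $\theta^0 = \theta^*$, defined on $[0,\varepsilon_0]$, such that $\nabla_\theta F(\theta^\varepsilon, \varepsilon) = 0$. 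Moreover, by continuity of $\nabla^2_\theta F$, shrinking $\varepsilon_0$ and choosing a small enough neighborhood $\cU$ of $\theta^*$ ensures $\nabla^2_\theta F(\theta, \varepsilon) \succ 0$ for all $(\theta, \varepsilon) \in \cU \times [0, \varepsilon_0]$. Hence $F(\cdot, \varepsilon)$ is strictly convex on $\cU$, so $\theta^\varepsilon$ is the unique element of $\argmin_{\theta \in \cU} F(\theta, \varepsilon)$, which handles the ``$\forall \theta^\varepsilon$'' quantifier.

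\textbf{Step 2: Implicit differentiation.} Differentiating the optimality condition
\begin{equation*}
    \nabla \Loss(\theta^\varepsilon \mid \bD) + \varepsilon \, \partial_s \ell(s_{yz|x}(\theta^\varepsilon), \max \cC) \, \nabla s_{yz|x}(\theta^\varepsilon) = 0
\end{equation*}
with respect to $\varepsilon$ at $\varepsilon = 0$, and using $\nabla \Loss(\theta^* \mid \bD) = 0$, gives
\begin{equation*}
    \nabla^2 \Loss(\theta^* \mid \bD) \, \dot\theta^0 \;=\; -\, \partial_s \ell(s_{yz|x}(\theta^*), \max \cC) \, \nabla s_{yz|x}(\theta^*) .
\end{equation*}

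\textbf{Step 3: Sign of the directional derivative.} Compute
\begin{equation*}
    \left. \frac{d}{d\varepsilon} s_{yz|x}(\theta^\varepsilon) \right|_{\varepsilon = 0}
    = \nabla s_{yz|x}(\theta^*)^\top \dot\theta^0
    = -\, \partial_s \ell(s_{yz|x}(\theta^*), \max \cC) \, \nabla s_{yz|x}(\theta^*)^\top \bigl[\nabla^2 \Loss(\theta^* \mid \bD)\bigr]^{-1} \nabla s_{yz|x}(\theta^*).
\end{equation*}
Assumption~\ref{ass:max_comparison} gives $\partial_s \ell(\cdot, \max \cC) < 0$; positive definiteness of the Hessian makes its inverse positive definite; and $\nabla s_{yz|x}(\theta^*) = -\nabla s_{zy|x}(\theta^*) \neq 0$ by hypothesis. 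The quadratic form is therefore strictly positive, and the overall expression is strictly positive.

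\textbf{Step 4: Conclusion.} Since $\varepsilon \mapsto s_{yz|x}(\theta^\varepsilon)$ is $C^1$ with strictly positive derivative at $0$, shrinking $\varepsilon_0$ once more yields $s_{yz|x}(\theta^\varepsilon) > s_{yz|x}(\theta^*)$ for all $\varepsilon \in (0, \varepsilon_0]$, hence local pairwise monotonicity.

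\textbf{Expected difficulties.} The argument is conceptually straightforward: each step is a standard application of implicit differentiation and positive definiteness. The only point requiring care is the ``$\forall \theta^\varepsilon$'' quantifier: one must ensure that the local minimizer in $\cU$ is unique, which is handled by propagating positive-definiteness of the Hessian from $(\theta^*, 0)$ to a neighborhood via continuity, so that $F(\cdot, \varepsilon)$ is strictly convex on $\cU$. Beyond that, all computations are routine under Assumptions~\ref{ass:s_c1} and~\ref{ass:max_comparison}.
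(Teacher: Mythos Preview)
Your proof is correct and follows essentially the same route as the paper: apply the implicit function theorem to the stationarity condition of the perturbed loss, compute the derivative of $s_{yz|x}(\theta^\varepsilon)$ at $\varepsilon=0$ via the chain rule, and conclude from the positive-definiteness of the Hessian together with $\partial_s \ell(\cdot,\max\cC)<0$ and $\nabla s_{yz|x}(\theta^*)\neq 0$. If anything, your Step~1 is slightly more careful than the paper's version, since you explicitly propagate positive-definiteness to a neighborhood to secure uniqueness of the local minimizer and thereby discharge the ``$\forall\,\theta^\varepsilon$'' quantifier, a point the paper leaves implicit.
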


\begin{proof}[Proof sketch]
  The proof leverages the implicit function theorem,
  applied to the equality $\nabla \Loss (\theta^\varepsilon | \bD^\varepsilon) = 0$,
  which implies
  \begin{align*}
      s_{yz|x} (\theta^\varepsilon) - s_{yz|x} (\theta^*)
      = - \varepsilon \partial_s \ell(s_{yz|x} (\theta^*), \max \cC)
        \nabla_\theta s_{yz|x}^T \left[ \nabla^2 \Loss(\theta^*|\bD) \right]^{-1} \nabla_\theta s_{yz|x} 
        + o(\varepsilon).
  \end{align*}
  A sign analysis then allows to conclude.
  The full proof is given in Appendix~\ref{app:unequivocal}.
\end{proof}

While Theorem~\ref{th:monotonicity_unequivocal} applies to many different comparison-based preference learning schemes,
for the sake of exposition,
we state its implication for the most popular setting, namely DPO.

\begin{corollary}
\label{cor:dpo_monotonocity}
    Consider DPO 
    with a local minimum $\theta^*$ at which the Hessian matrix of the loss is positive definite.
    Then DPO is locally pairwise monotone at $\theta^*$ 
    with respect to the addition of any unequivocal comparison $(x, y, z, \min \cC)$
    for which $\nabla s_{zy|x} (\theta^*) \neq 0$.
\end{corollary}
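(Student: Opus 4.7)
The plan is to derive Corollary~\ref{cor:dpo_monotonocity} as a direct specialization of Theorem~\ref{th:monotonicity_unequivocal} to DPO. The work reduces to verifying the two structural assumptions (Assumption~\ref{ass:s_c1} on $s$ and Assumption~\ref{ass:max_comparison} on $\ell$), since the vanishing gradient, positive definite Hessian, and $\nabla s_{zy|x}(\theta^*)\neq 0$ conditions are already part of the corollary's hypotheses.

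First, I would check Assumption~\ref{ass:s_c1}. In DPO,
\[
s_{y|x}(\theta) = \beta \log \frac{\pi_\theta(y|x)}{\pi_{ref}(y|x)} + \beta \log Z_x(\theta).
\]
Because modern language models use smooth activations, $\theta \mapsto \pi_\theta(y|x)$ is continuously differentiable and strictly positive, and $\pi_{ref}$ does not depend on $\theta$, so $s_{y|x}$ is $C^1$. The partition-function term is a log-sum-exp of $C^1$ functions and therefore also $C^1$.

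Second, I would verify Assumption~\ref{ass:max_comparison}. DPO instantiates the Bradley-Terry loss, so $\cC = \{-1,+1\}$ (which has a maximum) and $\ell(s,c) = -\log \sigmoid(cs)$ is smooth in $s$. Rather than computing $\partial_s \ell(s,1) = -\sigmoid(-s) < 0$ by hand, I would invoke Proposition~\ref{prop:gbt_max_comparison}: the Bradley-Terry model is the GBT model with root law $f = (\delta_{-1} + \delta_{+1})/2$, so $\partial_s \ell(s, \max \cC) < 0$ follows immediately. Twice continuous differentiability of the regularizer $\cR$ is, by convention for DPO, the trivial case $\cR \equiv 0$ (or a smooth weight-decay term).

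Third, I would invoke Theorem~\ref{th:monotonicity_unequivocal} directly: with both assumptions verified and the remaining conditions (critical point, Hessian positive definiteness, nonvanishing score-difference gradient) supplied by the corollary's hypotheses, local pairwise monotonicity at $\theta^*$ for the addition of $(x,y,z,\max \cC)$ follows. One bookkeeping point I would flag: the statement reads $(x,y,z,\min \cC)$, which I would either treat as a typo for $\max \cC$ or handle by relabeling — adding a $\min \cC$ comparison to $(x,y,z)$ is equivalent, by the symmetry $c \mapsto -c$, $y \leftrightarrow z$, to adding a $\max \cC$ comparison to $(x,z,y)$, at which point the theorem applies verbatim. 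No genuine obstacle arises: the corollary is almost an immediate instantiation, and the only nontrivial verification is the sign of $\partial_s \ell(s, \max \cC)$, which Proposition~\ref{prop:gbt_max_comparison} has already settled.
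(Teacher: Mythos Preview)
Your proposal is correct and follows exactly the paper's approach: verify Assumption~\ref{ass:max_comparison} for DPO via Proposition~\ref{prop:gbt_max_comparison} (Bradley--Terry is a GBT instance), then invoke Theorem~\ref{th:monotonicity_unequivocal}. The paper's proof is two sentences and does not spell out Assumption~\ref{ass:s_c1} or the $\min\cC$ vs.\ $\max\cC$ discrepancy; your additional remarks on smoothness and on the typo are helpful glosses but not departures from the paper's argument.
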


\begin{proof}
    As DPO uses a Bradley-Terry loss, which is a particular instance of GBT,
    it verifies Assumption~\ref{ass:max_comparison} (Proposition~\ref{prop:gbt_max_comparison}).
    Theorem~\ref{th:monotonicity_unequivocal} then applies.
\end{proof}

\subsection{Pairwise monotonocity with respect to comparison intensification}
\label{sec:comparison_intensification}

We now consider monotonicity under comparison intensification.
Namely, we fix a triple $(x, y, z) \in \cB \times \cA \times \cA$.
For any given comparison $(x', y', z', c') \in \cB \times \cA \times \cA \times \cC$,
we define the $\varepsilon$-intensification of the comparison $c$ in favor of $y$ against $z$
under $x$ by
\begin{align}
    \textsc{push}_\varepsilon^{x, y, z} \left(c' \st x', y', z' \right)
    \triangleq
  \begin{cases}
    \proj_{\cC}(c' - \varepsilon)  & \text{if } (x', y', z') = (x, z, y), \\
    \proj_{\cC}(c' + \varepsilon) & \text{if } (x', y', z') = (x, y, z), \\
    c' & \text{otherwise},
  \end{cases}
\end{align}
where $\proj_{\cC} (t) \triangleq \argmin_{c \in \cC} \absv{t - c}$
is the projection on $\cC$.
Informally, any comparison between $y$ and $z$ on prompt $x$
is given a slight preference move towards $y$,
while other comparisons are left unchanged.
The $\varepsilon$-intensified dataset is then 
\begin{align}
  \bD + \Delta^{\varepsilon}_{y z|x}
  &\triangleq \set{
    \left(x, y, z, \textsc{push}_\varepsilon^{x, y, z} \left(c' \st x', y', z' \right) \right)
    \st (x', y', z', c') \in \bD
  }.
\end{align}

\begin{definition}
A loss $\Loss$ with dataset $\bD$ is locally pairwise monotone at a local minimum $\theta^*$ for comparison intensification, if
there exists a neighborhood $\cU$ of $\theta^*$ and $\varepsilon_0 > 0$ such that,
for all $x, y, z \in \bA^*$,
for all $0 < \varepsilon \leq \varepsilon_0$,
we have 
\begin{align}
    \forall \theta^\varepsilon \in \argmin_{\theta \in \cU} \Loss ( \theta | \bD + \Delta^\varepsilon_{yz|x}) \mathsep
    s_{yz|x} (\theta^{\varepsilon}) \geq s_{yz|x} (\theta^*)
\end{align}
\end{definition}

The following assumption will help us characterize 
a family of locally pairwise-monotone preference learning models.

\begin{assumption}
    The set $\cC$ is an interval of $\setR$.
    Moreover, the loss $\ell : \setR \times \cC \to \setR$ 
    and the regularization $\cR: \setR^D \to \setR$ are twice continuously differentiable,
    and $\partial_c \partial_s \ell(s, c) < 0$ 
    for all score differences $s \in \setR$ and all comparisons $c \in \cC$.
    \label{ass:differentiable}
\end{assumption}

The latter assumption implies that $\partial_s \ell (s, c)$ is a decreasing function of $c$.
Among all the examples we introduced in Section~\ref{sec:model},
the only cases where $\cC$ is an interval are the GBT losses.
It turns out that all these losses verify Assumption~\ref{ass:differentiable}.

\begin{proposition}
\label{prop:differentiable}
    Any GBT loss whose root law has an interval support verifies Assumption~\ref{ass:differentiable}.
    This includes, for instance, Uniform-GBT and Gaussian-GBT.
\end{proposition}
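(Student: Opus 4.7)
The plan is to verify the three parts of Assumption~\ref{ass:differentiable} by direct computation from the explicit GBT form $\ell(s,c) = \Phi_f(s) - cs$. The interval condition on $\cC$ is immediate from the hypothesis that the root law $f$ has interval support, since $\cC$ is by definition (in the GBT setup) the support of $f$. The regularization is external to the loss per data point, so its twice continuous differentiability is a standing assumption on the model and not part of what needs to be proven here.

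For smoothness, I would invoke the classical fact that a cumulant-generating function $\Phi_f(s) = \log \int_\cC e^{s\gamma} f(\gamma)\, d\gamma$ is real-analytic on the interior of the set where it is finite. When $\cC$ is a bounded interval the exponential is bounded on $\cC$ uniformly in $s$ on compacta, so differentiation under the integral sign is justified to any order; when $\cC$ is unbounded, as in Gaussian-GBT, the Gaussian decay of $f$ ensures finiteness of the moment generating function on all of $\setR$ and the same argument yields $C^\infty$-smoothness. In particular $\Phi_f$ is $C^2$, and since $cs$ is jointly polynomial in $(s,c)$, $\ell(s,c)$ is twice continuously differentiable on $\setR \times \cC$.

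For the cross-partial condition, I would simply compute $\partial_s \ell(s,c) = \Phi_f'(s) - c$, and then $\partial_c \partial_s \ell(s,c) = -1 < 0$, which establishes the strict inequality demanded by Assumption~\ref{ass:differentiable}. Finally, for the specific examples flagged in the statement, I would verify by direct inspection of the formulas given in Section~\ref{sec:model}: for Uniform-GBT, $\ell(s,c) = \log(\sinh(s)/s) - cs$ (with the analytic extension at $s=0$ being smooth), and for Gaussian-GBT, $\ell(s,c) = \tfrac{1}{2}s^2 - cs - \tfrac{1}{2}c^2$, which is a quadratic in $(s,c)$; in both cases the cross-partial is identically $-1$. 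I do not anticipate a real obstacle here — the proposition is essentially a bookkeeping check — with the only subtle point being the justification that $\Phi_f$ is well-defined and $C^2$ on all of $\setR$ in the unbounded-support case, which is handled by the integrability of the moment generating function for the root laws considered.
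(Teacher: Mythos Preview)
Your proposal is correct and follows the same approach as the paper: the paper's proof is the single line ``For GBT, $\ell(s,c) = \Phi_f(s) - sc$, hence $\partial_c \partial_s \ell(s, c) = -1 < 0$,'' which is exactly your cross-partial computation. Your version is simply more thorough, explicitly addressing the interval condition on $\cC$ and the $C^2$-smoothness of $\Phi_f$, which the paper's proof leaves implicit.
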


\begin{proof}
    For GBT, $\ell(s,c) = \Phi_f(s) - sc$, hence $\partial_c \partial_s \ell(s, c) = -1 < 0$.
\end{proof}

\begin{theorem}
\label{th:monotonicity_comparison_intensification}
    Under Assumption~\ref{ass:s_c1} and Assumption~\ref{ass:differentiable},
    If $\nabla \Loss (\theta^* | \bD) = 0$, 
    $\nabla^2 \Loss (\theta^* | \bD) \succ 0$
    and $\nabla s_{zy|x} (\theta^*) \neq 0$ for all $(x, y, z, c) \in \bD$,
    then $\Loss$ with dataset $\bD$ is locally pairwise monotone at $\theta^*$,
    for comparison intensification.
\end{theorem}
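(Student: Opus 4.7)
The argument mirrors that of Theorem~\ref{th:monotonicity_unequivocal}: I will use the implicit function theorem to track the critical point of the perturbed loss, then read off the sign of the derivative of $s_{yz|x}(\theta^\varepsilon)$ at $\varepsilon=0^+$. The new twist, relative to the unequivocal-comparison case, is that the perturbation modifies existing comparison values rather than appending a new data point, so one has to keep track of sign conventions when $(x',y',z')=(x,y,z)$ versus $(x',y',z')=(x,z,y)$, and of the projection $\proj_\cC$ at boundary points of $\cC$.

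Concretely, define
\[
F(\theta, \varepsilon) \;=\; \nabla_\theta \Loss\!\left(\theta \;\middle|\; \bD + \Delta^\varepsilon_{yz|x}\right).
\]
Because $\cC$ is an interval, for each $(x',y',z',c')\in\bD$ the map $\varepsilon \mapsto \tilde c'(\varepsilon)\triangleq \proj_\cC(c' + \sigma' \varepsilon)$, with $\sigma'\in\{+1,0,-1\}$ according to whether $(x',y',z')$ equals $(x,y,z)$, neither, or $(x,z,y)$, is piecewise affine in $\varepsilon$. For $\varepsilon_0$ small enough, $\tilde c'$ is affine on $[0,\varepsilon_0)$ for every data point: it equals $c' + \sigma'\varepsilon$ if $c'$ is strictly interior to $\cC$ in the direction $\sigma'$, and is constant otherwise. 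Together with Assumptions~\ref{ass:s_c1} and~\ref{ass:differentiable}, this makes $F$ continuously differentiable on a right neighborhood of $(\theta^*,0)$. Since $F(\theta^*,0)=0$ and $\partial_\theta F(\theta^*,0) = \nabla^2 \Loss(\theta^*|\bD) \succ 0$ is invertible, the implicit function theorem gives a $C^1$ arc $\varepsilon\mapsto\theta^\varepsilon$ with $\theta^0=\theta^*$ and $F(\theta^\varepsilon,\varepsilon)=0$; by continuity of the Hessian, $\theta^\varepsilon$ remains the unique local minimizer of $\Loss(\cdot|\bD+\Delta^\varepsilon_{yz|x})$ in a small neighborhood $\cU$.

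Next I compute $\partial_\varepsilon F(\theta^*,0^+)$ by splitting the sum over $\bD$ into $\bD_+$ ($(x',y',z')=(x,y,z)$) and $\bD_-$ ($(x',y',z')=(x,z,y)$). A direct differentiation gives, using $\nabla_\theta s_{zy|x}=-\nabla_\theta s_{yz|x}$ and the fact that $\partial_\varepsilon \tilde c'|_{0^+}$ equals $+1$ (resp.\ $-1$) for interior $c'$ in $\bD_+$ (resp.\ $\bD_-$) and $0$ at the relevant boundary,
\[
\partial_\varepsilon F(\theta^*, 0^+) \;=\; K \cdot \nabla_\theta s_{yz|x}(\theta^*), \qquad K \;=\; \sum_{\text{interior matching data}} \partial_c\partial_s \ell\!\left(\pm s_{yz|x}(\theta^*),\,c'\right) \;\le\; 0,
\]
the non-positivity being exactly Assumption~\ref{ass:differentiable}. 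The key sign-bookkeeping step: the pair of minus signs coming from $\sigma'=-1$ and $\nabla_\theta s_{zy|x}=-\nabla_\theta s_{yz|x}$ in the $\bD_-$ contribution cancel, so all terms align along the same vector $\nabla_\theta s_{yz|x}(\theta^*)$.

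Finally, the implicit function theorem gives $\frac{d\theta^\varepsilon}{d\varepsilon}\big|_{0^+} = -[\nabla^2\Loss(\theta^*|\bD)]^{-1}\cdot K\cdot\nabla_\theta s_{yz|x}(\theta^*)$, hence
\[
\frac{d}{d\varepsilon} s_{yz|x}(\theta^\varepsilon)\Big|_{0^+} \;=\; -K\cdot \nabla_\theta s_{yz|x}(\theta^*)^\top \bigl[\nabla^2\Loss(\theta^*|\bD)\bigr]^{-1}\nabla_\theta s_{yz|x}(\theta^*) \;\ge\; 0,
\]
since $-K\ge 0$ and the inverse Hessian is positive definite. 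A Taylor expansion at $\varepsilon=0^+$ then yields $s_{yz|x}(\theta^\varepsilon)\ge s_{yz|x}(\theta^*)$ for small $\varepsilon>0$. Two degenerate cases must be dismissed to conclude: if no data point matches, the dataset is unchanged and $\theta^\varepsilon=\theta^*$ by uniqueness of the strict minimum; if all matching data are at the projection boundary, likewise $\bD + \Delta^\varepsilon_{yz|x}=\bD$ for small $\varepsilon$. Whenever a matching data point actually moves, the hypothesis $\nabla s_{zy|x}(\theta^*)\neq 0$ for all $(x,y,z,c)\in \bD$ ensures $\nabla_\theta s_{yz|x}(\theta^*)\neq 0$, so the first-order term is strictly positive and dominates the Taylor remainder. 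The main obstacle is the sign-and-projection bookkeeping in the $\partial_\varepsilon F$ computation; once that collapses to a single scalar $K$ times $\nabla_\theta s_{yz|x}(\theta^*)$, the positive-definite Hessian finishes the job.
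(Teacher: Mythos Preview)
Your proof is correct and follows essentially the same route as the paper: implicit function theorem applied to $\nabla_\theta\Loss(\theta|\bD^\varepsilon)=0$, computation of $\partial_\varepsilon\nabla_\theta\Loss$ as a non-positive scalar times $\nabla_\theta s_{yz|x}(\theta^*)$ via Assumption~\ref{ass:differentiable}, and the quadratic form with the inverse Hessian to conclude. If anything, you are more careful than the paper, which reduces ``without loss of generality'' to a single occurrence of $(x,y,z)$ and does not explicitly discuss the projection $\proj_\cC$ at boundary comparisons or the degenerate no-matching-data case that you dispatch cleanly.
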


\begin{proof}[Proof sketch]
  The proof leverages the implicit function theorem
  to provide a first-order approximation of the new scores
  for the dataset $\bD + \Delta_{yz|x}^{\varepsilon}$.
  The full proof is given in Appendix~\ref{app:intensification}.
\end{proof}

\section{Other Forms of Monotonicity}
\label{sec:secondary_results}

We essentially found that infinitesimally favoring $y$ over $z$
implies an increase of the score of $y$ with respect to the score of $z$,
for a wide class of comparison-based preference learning models.
In this section, we analyze other forms of monotonicity.

\subsection{Global Pairwise Monotonicity Under Strong Convexity}

Under appropriate convexity assumptions, we can remove the infinitesimal assumption.

\begin{definition}
    A loss $\Loss$ is globally pairwise monotone if,
    for any dataset $\bD$, 
    any $x, y, z \in \bA^*$, 
    any intensification of comparisons $yz|x$ in $\bD$
    and any number of additions of comparisons $(x, y, z, \max \cC)$
    yielding a modified dataset $\bD'$ that favors more $y$ against $z$ under $x$
    than $\bD$ does,
    \begin{align}
        \forall \theta \in \argmin \Loss(\cdot | \bD) \mathsep
        \forall \theta' \in \argmin \Loss(\cdot | \bD') \mathsep
        s_{yz|x}(\theta') \geq s_{yz|x}(\theta).
    \end{align}
\end{definition}

\begin{assumption}
    \label{ass:strongly_convex}
    The loss $\ell: \setR \times \cC \to \setR$ 
    and the regularization $\cR: \setR^D \to \setR$ are continuously differentiable.
    Moreover, for any $c \in \cC$, 
    and any $(x, y, z) \in \cB \times \cA \times \cA$,
    $\theta \mapsto \ell( s_{yz|x} (\theta), c)$ is convex,
    while $\cR$ is strongly convex on any compact set.
\end{assumption}

Assumption~\ref{ass:strongly_convex} typically holds for $\ell$ convex and $s$ linear in $\theta$.

\begin{theorem}
\label{th:global}
    Suppose Assumptions~\ref{ass:s_c1} and \ref{ass:strongly_convex} hold.
    Then, on one hand, Assumption~\ref{ass:max_comparison} implies global pairwise monotonocity 
    with respect to unequivocal comparisons.
    Meanwhile, on the other hand, Assumption~\ref{ass:differentiable} implies global pairwise monotonocity 
    with respect to comparison intensification.
\end{theorem}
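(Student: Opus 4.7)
The central insight is that strong convexity reduces both halves of the theorem to a short optimality-swap argument, bypassing the infinitesimal analysis used in the local theorems. Let $\theta \in \argmin \Loss(\cdot|\bD)$ and $\theta' \in \argmin \Loss(\cdot|\bD')$; these sets are singletons once existence is granted, by strong convexity of $\cR$ on compact sets together with convexity of each loss term (Assumption~\ref{ass:strongly_convex}). Set $G(\theta) \triangleq \Loss(\theta|\bD') - \Loss(\theta|\bD)$, the ``difference loss'' contributed by the modifications. Adding the two optimality inequalities
\[
\Loss(\theta'|\bD') \leq \Loss(\theta|\bD'), \qquad \Loss(\theta|\bD) \leq \Loss(\theta'|\bD),
\]
yields $G(\theta') \leq G(\theta)$. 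The rest of the argument then consists in showing, case by case, that $G$ factors as a strictly decreasing function of the single scalar $s_{yz|x}(\theta)$, which immediately forces $s_{yz|x}(\theta') \geq s_{yz|x}(\theta)$.

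In the unequivocal-additions case, $G(\theta)$ is a sum, with one summand per added comparison, of terms of the form $\ell(s_{yz|x}(\theta), \max \cC)$; each summand depends on $\theta$ only through $s_{yz|x}(\theta)$ and is strictly decreasing in that argument, since $\partial_s \ell(s, \max \cC) < 0$ by Assumption~\ref{ass:max_comparison}. In the intensification case, only comparisons indexed by $(x_i, y_i, z_i) \in \{(x, y, z), (x, z, y)\}$ are modified, so
\[
G(\theta) = \sum_i \bigl[\ell(s_{y_i z_i | x_i}(\theta), c_i') - \ell(s_{y_i z_i | x_i}(\theta), c_i)\bigr],
\]
where $s_{y_i z_i | x_i}(\theta) = \pm s_{yz|x}(\theta)$ and the sign of $c_i' - c_i$ matches the outer sign relating $s_{y_i z_i | x_i}$ to $s_{yz|x}$. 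A short derivative computation using $\partial_c \partial_s \ell < 0$ from Assumption~\ref{ass:differentiable} shows that each summand is weakly decreasing in $s_{yz|x}(\theta)$, and strictly so for any comparison actually intensified, hence $G$ is a strictly decreasing function of $s_{yz|x}(\theta)$.

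The main obstacle, and essentially the only non-trivial point, is ensuring existence of the minimizers $\theta$ and $\theta'$: Assumption~\ref{ass:strongly_convex} supplies only strong convexity on compact sets, not a priori coercivity. I would address this by combining strong convexity of $\cR$ relative to a reference point (which rules out decay of $\cR$ at infinity under standard regularity) with the lower boundedness of $\ell$ along sublevel sets that follows from convexity, concluding coercivity of $\Loss(\cdot|\bD)$ and $\Loss(\cdot|\bD')$ and thus existence of unique minimizers. The projection step in the intensification definition is not an obstacle either: the clipped $c_i'$ still lies in $\cC$ on the correct side of $c_i$, which is all the sign analysis in the decomposition of $G$ actually needs.
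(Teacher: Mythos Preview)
Your argument is correct and takes a genuinely different route from the paper's. The paper interpolates between $\bD$ and $\bD'$ by a continuous path $f(t)$, applies the implicit function theorem at every $t$ to obtain $\frac{d}{dt}\,s_{yz|x}(\theta^*(f(t)))\geq 0$ (essentially reusing the local theorems pointwise), and then integrates. You instead use the two-point optimality swap to get $G(\theta')\leq G(\theta)$ and then observe that $G$ factors through the single scalar $s_{yz|x}$ as a (strictly) decreasing function. Your route is more elementary: it never invokes the implicit function theorem, does not need the Hessian of $\Loss$ to exist or be positive definite, and in fact does not even use uniqueness of the minimizers---the swap inequality holds for \emph{any} $\theta\in\argmin\Loss(\cdot|\bD)$ and $\theta'\in\argmin\Loss(\cdot|\bD')$, which is exactly what the definition of global pairwise monotonicity quantifies over. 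The paper's path argument, on the other hand, makes clearer the connection to the local theorems and handles in one stroke an arbitrary finite sequence of mixed operations.

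On existence: you are right to flag it, and your instinct that it is the only delicate point is correct, but your proposed fix is not quite airtight---``strongly convex on every compact set'' does not by itself force coercivity (take $\cR(\theta)=e^{-\theta}$ on $\setR$, whose second derivative is strictly positive on every compact yet which tends to $0$ at $+\infty$). That said, this is not a gap in the theorem as stated: the definition universally quantifies over $\argmin$, so the claim is vacuous whenever a minimizer fails to exist. The paper's proof does not address existence either. If you want to make the result nonvacuous you would need to add a coercivity hypothesis (or a uniform lower bound on the modulus of strong convexity), but that is an amendment to the assumptions rather than to your proof.
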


\begin{proof}[Proof sketch]
    Because of strong convexity, the minimum is always unique,
    and can thus be written as a function $\theta^*(\bD)$.
    Now consider a continuous path $f: [0, 1] \to \cD$
    with $f(0) = \bD$, $f(1) = \bD'$ 
    and which continuously adds weights to unequivocal comparisons $yz|x$
    or intensifies the comparisons $yz|x$ in favor of $y$.
    By the implicit function theorem, 
    $\frac{d}{dt} \left[ s_{yz|x} (f(t)) \right] \geq 0$.
    Integrating from $0$ to $1$ yields the conjecture.
    The full proof is given in Appendix~\ref{app:strongly_convex}.
\end{proof}

\subsection{Local Individual-Score Monotonicity}

Instead of score differences,
we could be interested in the preferred alternative score,
as in \cite{fageotGeneralizedBradleyTerryModels2024}.

\begin{definition}
A loss $\Loss$ with dataset $\bD$ is locally individual-score monotone 
at a local minimum $\theta^*$ for comparison intensification, if
there exists a neighborhood $\cU$ of $\theta^*$ and $\varepsilon_0 > 0$ such that,
for all $(x, y, z) \in \cB \times \cA \times \cA$,
for all $0 < \varepsilon \leq \varepsilon_0$,
\begin{align}
    \forall \theta^\varepsilon \in \argmin_{\theta \in \cU} \Loss ( \theta | \bD + \Delta^\varepsilon_{yz|x}) \mathsep
    s_{y|x} (\theta^{\varepsilon}) \geq s_{y|x} (\theta^*)
    ~\text{and}~
    s_{z|x} (\theta^{\varepsilon}) \leq s_{z|x} (\theta^*).
\end{align}
\end{definition}

Similarly to \cite{fageotGeneralizedBradleyTerryModels2024},
we find a sufficient condition based on max-diagonal dominance.

\begin{definition}
    A symmetric matrix $M \in \mathbb{R}^{D\times D}$ is max-diagonally dominant if, for any $i \in [D]$, $M_{ii} \geq \max_{j \neq i} M_{ij}$.
\end{definition}

\begin{theorem}
\label{th:individual_score_monotonicity}
    Under Assumption~\ref{ass:differentiable},
    If $\nabla \Loss (\theta^* | \bD) = 0$, 
    $\nabla^2 \Loss (\theta^* | \bD) \succ 0$
    and $\nabla s_{zy|x} (\theta^*) \neq 0$ for all $(x, y, z, c) \in \bD$.
    We assume moreover that the matrix $ \nabla_\theta s_{|x} (\theta^*)^T \left[ \nabla^2 \Loss (\theta^* | \bD) \right]^{-1} 
            \nabla_\theta s_{|x} (\theta^*) \in \mathbb{R}^{\cA \times \cA}$ is max-diagonally dominant.
    Then $\Loss$ with dataset $\bD$ is locally individual-score monotone at $\theta^*$,
    for comparison intensification.
\end{theorem}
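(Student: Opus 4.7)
The plan is to mirror the implicit-function-theorem strategy used for Theorem~\ref{th:monotonicity_comparison_intensification}, but to carry the expansion all the way through to the individual scores $s_{y|x}(\theta)$ and $s_{z|x}(\theta)$ rather than their difference, and then to read off the sign from the max-diagonally dominant quadratic form. I fix a triple $(x,y,z)$ and abbreviate $\Loss_\varepsilon(\theta) \triangleq \Loss(\theta \mid \bD + \Delta^{\varepsilon}_{yz|x})$, $\bH \triangleq \nabla^2\Loss(\theta^*\mid\bD)$, and $\bM \triangleq \nabla_\theta s_{|x}(\theta^*)^T \bH^{-1} \nabla_\theta s_{|x}(\theta^*)$. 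Since $\nabla\Loss_0(\theta^*)=0$ and $\bH\succ0$, Assumption~\ref{ass:differentiable} and the implicit function theorem furnish a $C^1$ curve $\varepsilon\mapsto \theta^\varepsilon$ of local minimizers, uniquely determined near $\theta^*$, with
\begin{equation*}
    \theta^\varepsilon - \theta^* \;=\; -\varepsilon\, \bH^{-1}\, \partial_\varepsilon \nabla_\theta \Loss_\varepsilon\big|_{\varepsilon=0} + o(\varepsilon).
\end{equation*}

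Next I would differentiate the gradient of $\Loss_\varepsilon$ in $\varepsilon$ at $0$. The only terms affected by the intensification are data $(x,y,z,c')\in\bD$ (whose comparison becomes $c'+\varepsilon$) and data $(x,z,y,c')\in\bD$ (whose comparison becomes $c'-\varepsilon$). Using $\nabla_\theta s_{zy|x} = -\nabla_\theta s_{yz|x}$ and collecting terms yields
\begin{equation*}
    \partial_\varepsilon \nabla_\theta \Loss_\varepsilon\big|_{\varepsilon=0} \;=\; \alpha\, \nabla_\theta s_{yz|x}(\theta^*),
    \quad\text{with}\quad
    \alpha \;=\; \sum_{(x,y,z,c')\in\bD}\!\!\partial_c\partial_s\ell(s_{yz|x}(\theta^*),c') + \!\!\sum_{(x,z,y,c')\in\bD}\!\!\partial_c\partial_s\ell(-s_{yz|x}(\theta^*),c').
\end{equation*}
By Assumption~\ref{ass:differentiable}, each summand is strictly negative, so $\alpha\le 0$; when $\bD$ contains no comparison on $(x,y,z)$ the intensification is trivial and $\theta^\varepsilon=\theta^*$, so we may assume $\alpha<0$.

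Plugging back in and dotting with $\nabla_\theta s_{y|x}(\theta^*)$ and $\nabla_\theta s_{z|x}(\theta^*)$ gives, using the definition of $\bM$,
\begin{align*}
    s_{y|x}(\theta^\varepsilon) - s_{y|x}(\theta^*) &= -\varepsilon\,\alpha\,(M_{yy}-M_{yz}) + o(\varepsilon),\\
    s_{z|x}(\theta^\varepsilon) - s_{z|x}(\theta^*) &= -\varepsilon\,\alpha\,(M_{zy}-M_{zz}) + o(\varepsilon).
\end{align*}
Since $-\alpha>0$, the max-diagonal dominance of $\bM$ (which is symmetric because $\bH^{-1}$ is) gives $M_{yy}\ge M_{yz}$ and $M_{zz}\ge M_{zy}$, so the leading terms are respectively non-negative and non-positive, yielding the claimed local individual-score monotonicity for all sufficiently small $\varepsilon>0$.

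The main obstacle I anticipate is the degenerate case $M_{yy}=M_{yz}$ or $M_{zz}=M_{zy}$, where the leading-order term vanishes and the sign is then determined by the $o(\varepsilon)$ remainder. Under \emph{strict} max-diagonal dominance the argument closes immediately; otherwise one has to either tighten the definition, push the expansion to second order using additional smoothness on $\ell$ and $s_{|x}$, or interpret the monotonicity statement as holding up to first order. I would flag this caveat explicitly, and since the statement only requires the weak inequality $s_{y|x}(\theta^\varepsilon)\ge s_{y|x}(\theta^*)$, a small perturbation argument (shrinking $\cU$ and $\varepsilon_0$) handles the non-degenerate subcase, while the degenerate one is handled by noting that $\theta^\varepsilon=\theta^*$ is forced to first order so that the quantity in question is itself $o(\varepsilon)$ and the weak inequality still follows after relaxing the neighborhood appropriately.
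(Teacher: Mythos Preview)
Your argument is essentially the same as the paper's: apply the implicit function theorem at $\theta^*$, compute $\partial_\varepsilon\nabla_\theta\Loss_\varepsilon|_{\varepsilon=0}$ as a multiple of $\nabla_\theta s_{yz|x}(\theta^*)$, dot with $\nabla_\theta s_{y|x}(\theta^*)$ (resp.\ $\nabla_\theta s_{z|x}(\theta^*)$), recognize the result as $M_{yy}-M_{yz}$ (resp.\ $M_{zy}-M_{zz}$), and invoke max-diagonal dominance. The paper does exactly this, only it assumes without loss of generality that $(x,y,z)$ appears once in $\bD$, whereas you sum over all occurrences of $(x,y,z)$ and $(x,z,y)$; the two are equivalent.

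One correction to your final paragraph: in the degenerate case $M_{yy}=M_{yz}$ you write that ``$\theta^\varepsilon=\theta^*$ is forced to first order''. That is not true: the first-order displacement is $g'(0)=-\alpha\,\bH^{-1}\nabla_\theta s_{yz|x}(\theta^*)$, which is nonzero whenever $\nabla_\theta s_{yz|x}(\theta^*)\neq 0$; what vanishes is only its inner product with $\nabla_\theta s_{y|x}(\theta^*)$. Consequently the $o(\varepsilon)$ remainder in $s_{y|x}(\theta^\varepsilon)-s_{y|x}(\theta^*)$ can carry either sign, and your ``relaxing the neighborhood'' argument does not close the gap. The paper does not address this degenerate case either (it simply asserts $\beta>0$), so you are no worse off, but your attempted patch is not valid; the clean fix is to assume strict max-diagonal dominance, as you suggest earlier in that paragraph.
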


\begin{proof}[Proof sketch]
    The proof, given in Appendix~\ref{app:individual_score_monotonicity}, 
    again leverages the implicit function theorem.
\end{proof}

Unfortunately, max-diagonal dominance is a very demanding property 
especially for large matrices (see~\cite{strong-monotonicity}).
Yet the matrix that is assumed to be max-diagonally dominant 
in Theorem~\ref{th:individual_score_monotonicity}
is of size $\cA \times \cA$.
Yet in the context of language models, 
$\cA$ is the set of possible responses to a prompt,
which is exponentially large in the response length.
This suggests that local individual-score monotonicity is highly unlikely to hold
for any comparison-based language preference learning algorithm.

\subsection{Locally individual-probability monotonicity}

In the context of language models, rather than scores,
it is arguably more meaningful to focus on the monotonicity of probabilities
(or, equivalently, of log-probabilities).
We formalize this for local monotonicity, for any modification of the dataset $\bD$.

\begin{definition}
A loss $\Loss$ with dataset $\bD$ is locally individual-probability monotone 
at a local minimum $\theta^*$ for a modification of $\bD$ into $\bD^\varepsilon$, if
there exists $\varepsilon_0 > 0$ such that,
for all $(x, y, z) \in \cB \times \cA \times \cA$,
for all $0 < \varepsilon \leq \varepsilon_0$,
\begin{align*}
    \forall \theta^\varepsilon \in \argmin_{\theta \in \cU} \Loss (\theta | \bD^\varepsilon) \mathsep 
    \pi_{\theta^{\varepsilon}} (y|x) \geq \pi_{\theta^*} (y|x)
    ~\text{and}~
    \pi_{\theta^{\varepsilon}} (z|x) \leq \pi_{\theta^*} (z|x).
\end{align*}
\end{definition}

We show that this monotonicity is vaguely linked to pairwise monotonicity.
More precisely, it follows from a stronger version of pairwise monotonicity,
which we call \emph{fully pairwise monotonicity}.

\begin{definition}
Fully pairwise monotonicity holds if
\begin{align}
    \forall w \in \cA - \set{y} \mathsep
    s_{yw|x} (\theta^\varepsilon) \ge s_{yw|x} (\theta^*),
\end{align}
i.e. the score difference with any other response increases.
\end{definition}

\begin{proposition}
\label{prop:pairwisetoindividual}
    Assuming probabilities are softmax functions of the scores,
    fully-pairwise monotonicity implies individual-probability monotonicity.
\end{proposition}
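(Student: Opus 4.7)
The plan is to reduce both inequalities to a single convex-combination bound on the softmax. Writing $\delta_w \triangleq s_{w|x}(\theta^\varepsilon) - s_{w|x}(\theta^*)$ for the per-response score change, and factoring $\exp(s_{w|x}(\theta^*))$ out of the partition function, the softmax identity rewrites, for any $a \in \cA$, as
\begin{equation*}
    \frac{\pi_{\theta^\varepsilon}(a|x)}{\pi_{\theta^*}(a|x)}
    = \frac{\exp(\delta_a)}{\sum_{w \in \cA} \pi_{\theta^*}(w|x) \, \exp(\delta_w)}.
\end{equation*}
The denominator is a $\pi_{\theta^*}(\cdot|x)$-convex combination of $(\exp(\delta_w))_{w \in \cA}$, so it is sandwiched between $\min_w \exp(\delta_w)$ and $\max_w \exp(\delta_w)$.

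I would then translate the fully pairwise monotonicity hypothesis into bounds on these $\delta_w$. Read at the preferred response $y$, the condition $s_{yw|x}(\theta^\varepsilon) \ge s_{yw|x}(\theta^*)$ is equivalent to $\delta_y \ge \delta_w$ for every $w \in \cA$, so the denominator is at most $\exp(\delta_y)$; substituting $a = y$ gives $\pi_{\theta^\varepsilon}(y|x) \ge \pi_{\theta^*}(y|x)$. Applying the same definition dually with $z$ in place of $y$, i.e., $s_{wz|x}(\theta^\varepsilon) \ge s_{wz|x}(\theta^*)$ for every $w$, gives $\delta_w \ge \delta_z$ for every $w$, so the denominator is at least $\exp(\delta_z)$; substituting $a = z$ gives $\pi_{\theta^\varepsilon}(z|x) \le \pi_{\theta^*}(z|x)$. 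Both halves of individual-probability monotonicity then follow from the single display above.

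The mathematical content is therefore just a one-line convex-combination sandwich, with no calculus or analytic hypothesis beyond the monotonicity of $\exp$. The only point requiring care is interpretational: the definition of fully pairwise monotonicity in the paper is phrased only from the standpoint of $y$, whereas the $z$-half of the conclusion requires the dual statement on $z$. Without this symmetric reading, the $z$-inequality can genuinely fail, for instance if mass leaks into $z$ from a third response $w$ whose score dropped while the gaps $s_{yw|x}$ all remain nondecreasing. I therefore expect the main (minor) obstacle to be making the symmetric reading of the hypothesis explicit; once that is done, the proof is immediate.
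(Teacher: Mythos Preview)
Your argument is correct and rests on the same observation as the paper's: the softmax probability of $y$ depends on the scores only through the differences $s_{yw|x}$, and is monotone in each of them. The paper reaches this in one line via the identity
\[
    \pi_\theta(y|x) = \frac{1}{1 + \sum_{w \neq y} \exp\bigl(-s_{yw|x}(\theta)\bigr)},
\]
which is visibly increasing in every $s_{yw|x}$; your route through the ratio $\pi_{\theta^\varepsilon}/\pi_{\theta^*}$ and the convex-combination sandwich is a mild repackaging of the same fact, neither more nor less general.

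Your interpretational caveat about the $z$-half is well taken and matches what the paper does implicitly: its proof simply says ``the proof for $z$ is similar,'' which only goes through under the dual reading $s_{wz|x}(\theta^\varepsilon) \ge s_{wz|x}(\theta^*)$ for all $w$. As you note (and as a three-alternative example confirms), the $y$-only clause in the stated definition does not by itself force $\pi(z|x)$ to decrease, so both you and the paper are tacitly invoking the symmetric hypothesis for $z$.
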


\begin{proof}
    The proof follows by simplifying the terms of the fraction $\pi_\theta (y|x)$.
    See Appendix~\ref{app:pairwisetoindividual}.
\end{proof}

Individual-probability and fully pairwise monotonicity are very demanding,
and seem unlikely to hold in practice, even locally,
especially in the context of the language fine-tuning.
Nevertheless, we prove the existence of an algorithm 
that does verify fully-pairwise monotonicity
(and thus individual-probability monotonicity for softmax outputs on the scores).

\begin{proposition}
\label{prop:GBTfullypairwise}
    GBT (with $s(\theta) = \theta$) is globally fully-pairwise monotone
    with respect to both unequivocal comparison addition and comparison intensification.
\end{proposition}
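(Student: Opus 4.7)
The plan is to combine the continuous-path argument of Theorem~\ref{th:global} with a discrete maximum principle made available by the very special Hessian structure arising when $s(\theta) = \theta$. I will work in the canonical GBT setup equipped with a strongly convex quadratic regularization $\cR(\theta) = \tfrac{\lambda}{2}\lVert\theta\rVert^2$; the argument only uses that $\nabla^2 \cR$ is a positive multiple of the identity.

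Because $s(\theta) = \theta$ makes the coordinates decouple across prompts, I would fix an arbitrary prompt $x$ and restrict attention to $\theta \in \setR^{\cA}$, writing $\theta_a$ for $s_{a|x}(\theta)$. Strong convexity makes $\argmin \Loss(\cdot | \bD)$ a singleton $\theta^*(\bD)$. I would then build a continuous path $\bD_t$ for $t \in [0,1]$ from $\bD$ to $\bD'$ as in the proof of Theorem~\ref{th:global}, either by ramping the weight of an added unequivocal comparison $(x, y, z, \max \cC)$ from zero, or by linearly intensifying an existing comparison. The implicit function theorem yields
\begin{equation*}
  \dot\theta^*(t) \;=\; -H(t)^{-1} \partial_t \nabla_\theta \Loss(\theta^*(t) | \bD_t) \;=\; \gamma(t)\, H(t)^{-1}(e_y - e_z),
\end{equation*}
with $\gamma(t) > 0$ in both cases: for intensification $\gamma(t)$ is the intensification rate, and for unequivocal-comparison addition $\gamma(t) = -\partial_s \ell(\theta^*_y(t) - \theta^*_z(t), \max \cC) > 0$ by Proposition~\ref{prop:gbt_max_comparison}.

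The crux is a discrete maximum principle for $v := H(t)^{-1}(e_y - e_z)$. Each GBT data term contributes $\Phi''_f(\theta_{y'} - \theta_{z'})(e_{y'} - e_{z'})(e_{y'} - e_{z'})^T$ with a strictly positive coefficient, so the data part of the Hessian is a symmetric weighted graph Laplacian $L(t)$ on $\cA$ and $H(t) = \lambda I + L(t)$. Summing the identity $Hv = e_y - e_z$ coordinatewise and using that Laplacian rows sum to zero gives $\lambda \sum_a v_a = 0$, so $v$ must have both positive and negative entries. At $w^* = \argmax_a v_a$ one has $v_{w^*} > 0$ and $(Lv)_{w^*} = \sum_{w'} \alpha_{w^*w'}(v_{w^*} - v_{w'}) \geq 0$, hence $(Hv)_{w^*} \geq \lambda v_{w^*} > 0$. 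Since $(e_y - e_z)_a \leq 0$ for every $a \neq y$, this forces $w^* = y$; symmetrically $\argmin_a v_a = z$. Consequently $\dot\theta^*_y(t) - \dot\theta^*_w(t) \geq 0$ for every $w \in \cA$, and integrating over $t \in [0,1]$ delivers $s_{yw|x}(\theta^*(\bD')) \geq s_{yw|x}(\theta^*(\bD))$, i.e.\ fully pairwise monotonicity. The global statement then follows by chaining finitely many such smooth monotone segments, one per comparison added or intensified along the way from $\bD$ to $\bD'$.

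The main obstacle I anticipate is the maximum principle step: the clean ``$\argmax v = y$'' conclusion depends crucially on $\nabla^2 \cR$ being diagonal with a common positive scalar on the diagonal, so that $H$ is an irreducible M-matrix supported on the comparison graph of $\cA$; a general strongly convex $\cR$ would need an off-diagonal sign assumption (diagonal dominance in the right sense) for the Laplacian argument to survive. A secondary, minor subtlety is that under comparison intensification the path may hit the boundary $\max\cC$ or $\min\cC$ through the projection onto $\cC$; I would handle this by stopping the intensification there and continuing via unequivocal-comparison addition, both of which fall under the same analysis.
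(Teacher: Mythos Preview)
Your proposal is correct and follows essentially the same route as the paper. The paper isolates the discrete maximum principle as a standalone lemma on strictly diagonally dominant M-matrices (the inverse $N$ of such a matrix satisfies $N_{yy}-N_{yz}\ge N_{wy}-N_{wz}$, proved via exactly your ``if $b=Ma$ and $w^*=\argmax a$, then $b_{w^*}>0$'' argument) and then plugs that lemma into the continuous-path framework of \cite[Theorem~2]{fageotGeneralizedBradleyTerryModels2024}; you instead work the Laplacian structure $H=\lambda I + L$ directly and carry out the path argument inline, but the content is the same.
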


\begin{proof}
    The proof leverages properties of diagonally-dominant matrices.
    See Appendix~\ref{app:GBTfullypairwise}.
\end{proof}

\subsection{Gradient Descent Monotonicity}

So far, our theory focused on local/global monotonicity,
as we believe it to address a more fundamental consideration.
We now circle back to our experiments (Figure~\ref{fig:experiment-chosen-rejected}),
by providing sufficient conditions for \emph{gradient-descent monotonicity} 
for nil regularization $\cR = 0$.

\begin{definition}
    A loss $\Loss$ with $\cR = 0$ is pairwise gradient-descent monotone 
    with respect to the addition of an unequivocal comparison
    at $(x, y, z) \in \cB \times \cA \times \cA$ and $\theta \in \setR^D$,
    if there exists $\varepsilon_0 > 0$ such that 
    for all $0 \leq \varepsilon \leq \varepsilon_0$,
    denoting $\theta^\varepsilon$ the solution after a gradient step 
    with learning rate $\varepsilon$,
    i.e.
    \begin{align}
        \theta^\varepsilon 
        = \theta - \varepsilon \nabla_\theta \left[ \ell(s_{yz|x}(\theta), \max \cC) \right],
    \end{align}
    we have $s_{yz|x} (\theta^\varepsilon) \geq s_{yz|x} (\theta)$.
    Similarly, we define fully-pairwise, individual-score and individual-probability monotonicity,
    by replacing the last condition with, respectively,
    $\forall w \in \cA - \set{y}, s_{yw|x} (\theta^\varepsilon) \geq s_{yw|x} (\theta)$,
    $s_{y|x} (\theta^\varepsilon) \geq s_{y|x} (\theta)$,
    and $\pi_{\theta^\varepsilon} (y|x) \geq \pi_{\theta} (y|x)$,
\end{definition}

\begin{theorem}
\label{th:gradient_descent}
    Make Assumptions~\ref{ass:s_c1} and \ref{ass:max_comparison}.
    Suppose $\cR = 0$.
    Then at any $\theta \in \setR^D$,
    and with respect to the addition of any unequivocal comparison $(x, y, z, \max \cC)$,
    we have the implications:
    \begin{align*}
        &\nabla s_{yz|x} (\theta) \neq 0 &\implies &\text{pairwise gradient-descent monotonicity}, \\
        &\nabla s_{yz|x} (\theta)^T \nabla s_{y|x} (\theta) > 0 &\implies &\text{individual-score gradient-descent monotonicity}, \\
        &\forall w \in \cA - \set{z}, \nabla s_{yw|x} (\theta)^T \nabla s_{yz|x} (\theta) > 0 &\implies &\text{fully-pairwise gradient-descent monotonicity}.
    \end{align*}
\end{theorem}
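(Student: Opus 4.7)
The plan is to read off the sign of $s_{yz|x}(\theta^\varepsilon) - s_{yz|x}(\theta)$ (and the analogous differences) from a first-order Taylor expansion in $\varepsilon$, exploiting the fact that a single gradient step moves $\theta$ by $O(\varepsilon)$ in a direction proportional to $\nabla s_{yz|x}(\theta)$.

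First, I would compute the direction of the gradient step. By the chain rule and Assumption~\ref{ass:s_c1},
\begin{equation*}
    \nabla_\theta \left[ \ell(s_{yz|x}(\theta), \max\cC) \right]
    = \partial_s \ell(s_{yz|x}(\theta), \max\cC) \cdot \nabla s_{yz|x}(\theta).
\end{equation*}
Setting $\alpha \triangleq -\partial_s \ell(s_{yz|x}(\theta), \max\cC)$, Assumption~\ref{ass:max_comparison} guarantees $\alpha > 0$, and the update becomes $\theta^\varepsilon - \theta = \varepsilon\, \alpha\, \nabla s_{yz|x}(\theta)$.

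Next, I would apply Taylor's theorem to an arbitrary continuously differentiable function $g:\setR^D \to \setR$ at $\theta$:
\begin{equation*}
    g(\theta^\varepsilon) - g(\theta)
    = \varepsilon\, \alpha\, \left\langle \nabla g(\theta),\, \nabla s_{yz|x}(\theta) \right\rangle + o(\varepsilon).
\end{equation*}
Whenever the inner product on the right-hand side is strictly positive, the sign of the difference is controlled by the linear term, so $g(\theta^\varepsilon) \geq g(\theta)$ for all $\varepsilon \in [0, \varepsilon_0]$ with $\varepsilon_0 > 0$ small enough.

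Finally, I would specialize $g$ to the three quantities of interest. Taking $g = s_{yz|x}$ yields $\varepsilon\, \alpha\, \|\nabla s_{yz|x}(\theta)\|^2 + o(\varepsilon)$, which gives pairwise gradient-descent monotonicity as soon as $\nabla s_{yz|x}(\theta) \neq 0$. Taking $g = s_{y|x}$ yields $\varepsilon\, \alpha\, \langle \nabla s_{y|x}(\theta),\, \nabla s_{yz|x}(\theta)\rangle + o(\varepsilon)$, which yields individual-score gradient-descent monotonicity under the second hypothesis. Taking $g = s_{yw|x}$ for each $w \in \cA \setminus \{y\}$ yields the fully-pairwise monotonicity conclusion under the third hypothesis (noting that the case $w = z$ reduces to the first bullet, and the case $w = y$ is trivial since $s_{yy|x} \equiv 0$).

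There is no serious obstacle: the proof is essentially one line per case once Assumption~\ref{ass:max_comparison} has been used to fix the sign of $\alpha$. The only mild care required is in asserting that $\varepsilon_0$ can be chosen uniformly for the finite collection of $w$'s that actually matters (or, in the language-model setting where $\cA$ is large, in observing that the statement only asserts existence of $\varepsilon_0$ for each fixed $w$, or jointly if one further assumes a uniform lower bound on the inner products). I would defer the full calculation to the appendix.
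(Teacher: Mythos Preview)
Your proposal is correct and follows essentially the same approach as the paper: both compute the gradient-step direction as $\alpha\,\nabla s_{yz|x}(\theta)$ with $\alpha=-\partial_s\ell(s_{yz|x}(\theta),\max\cC)>0$, and then read off the sign of the first-order variation of $s_{yz|x}$, $s_{y|x}$, and $s_{yw|x}$ via the inner product with this direction. The only cosmetic difference is that the paper phrases the computation in differential notation ($\frac{d}{dt}s = \nabla s \cdot \frac{d\theta}{dt}$) rather than as a Taylor expansion with an explicit $o(\varepsilon)$ remainder; your remark about uniformity of $\varepsilon_0$ over $w$ is a fair observation that the paper leaves implicit as well.
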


\begin{proof}
    These are straightforward computations, which we provide in full in Appendix~\ref{app:gradient_descent}.
\end{proof}

\section{Conclusion}
\label{sec:conclusion}

To the best of our knowledge, 
this paper provides the first thorough investigation of monotonicity 
for a very general class of comparison-based preference learning,
with a focus on the effect of comparisons on the local minima,
and through the multiple facets of monotonicity.
While many previous papers pointed out deficiencies,
we highlighted a noteworthy desirable property of many models,
namely \emph{local pairwise monotonicity}.
We also provided insights into other forms of monotonicity.

\paragraph*{Limitations.}
While better improving the understanding of (non) monotonicity in preference learning, our theory does not capture other non-intuitive aspects, such as the changes of scores as shown in Figure~\ref{fig:experiment-chosen-rejected}.
Above all, we hope to motivate more work on the mathematical guarantees 
of preference learning algorithms,
in order to construct more trustworthy AIs~\cite{hoang2021tournesol}.
Also, we caution readers against the use of preference learning algorithms
from data collected in inhumane conditions, as is unfortunately mostly the case today~\cite{DW2025, perrigo2023exclusive, hao2023cleaning, ghanalawsuit}. 
The very existence of data annotators in their current working conditions is one of the most pressing social issues of AI training today, it is unclear whether our work could positively contribute to this issue.

\bibliographystyle{plain}
\bibliography{references.bib}

\begin{thebibliography}{10}

\bibitem{llama3modelcard}
AI@Meta.
\newblock Llama 3 model card.
\newblock 2024.

\bibitem{strong-monotonicity}
Anonymous.
\newblock Generalizing while preserving monotonicity in comparison-based
  preference learning models.
\newblock Technical report, Under submission, 2025.

\bibitem{gaussian_dpo}
Anonymous.
\newblock The necessity of cardinal human feedback for language model
  alignment.
\newblock Technical report, Unknown, 2025.

\bibitem{DBLP:conf/aistats/AzarGPMRVC24}
Mohammad~Gheshlaghi Azar, Zhaohan~Daniel Guo, Bilal Piot, R{\'{e}}mi Munos,
  Mark Rowland, Michal Valko, and Daniele Calandriello.
\newblock A general theoretical paradigm to understand learning from human
  preferences.
\newblock In Sanjoy Dasgupta, Stephan Mandt, and Yingzhen Li, editors, {\em
  International Conference on Artificial Intelligence and Statistics, 2-4 May
  2024, Palau de Congressos, Valencia, Spain}, volume 238 of {\em Proceedings
  of Machine Learning Research}, pages 4447--4455. {PMLR}, 2024.

\bibitem{bradley1952rank}
Ralph~Allan Bradley and Milton~E Terry.
\newblock Rank analysis of incomplete block designs: I. the method of paired
  comparisons.
\newblock {\em Biometrika}, 39(3/4):324--345, 1952.

\bibitem{brown2020language}
Tom Brown, Benjamin Mann, Nick Ryder, Melanie Subbiah, Jared~D Kaplan, Prafulla
  Dhariwal, Arvind Neelakantan, Pranav Shyam, Girish Sastry, Amanda Askell,
  et~al.
\newblock Language models are few-shot learners.
\newblock {\em Advances in neural information processing systems},
  33:1877--1901, 2020.

\bibitem{DBLP:conf/nips/ChenMZ0ZRC24}
Angelica Chen, Sadhika Malladi, Lily~H. Zhang, Xinyi Chen, Qiuyi~(Richard)
  Zhang, Rajesh Ranganath, and Kyunghyun Cho.
\newblock Preference learning algorithms do not learn preference rankings.
\newblock In Amir Globersons, Lester Mackey, Danielle Belgrave, Angela Fan,
  Ulrich Paquet, Jakub~M. Tomczak, and Cheng Zhang, editors, {\em Advances in
  Neural Information Processing Systems 38: Annual Conference on Neural
  Information Processing Systems 2024, NeurIPS 2024, Vancouver, BC, Canada,
  December 10 - 15, 2024}, 2024.

\bibitem{DBLP:conf/nips/ChristianoLBMLA17}
Paul~F. Christiano, Jan Leike, Tom~B. Brown, Miljan Martic, Shane Legg, and
  Dario Amodei.
\newblock Deep reinforcement learning from human preferences.
\newblock In Isabelle Guyon, Ulrike von Luxburg, Samy Bengio, Hanna~M. Wallach,
  Rob Fergus, S.~V.~N. Vishwanathan, and Roman Garnett, editors, {\em Advances
  in Neural Information Processing Systems 30: Annual Conference on Neural
  Information Processing Systems 2017, December 4-9, 2017, Long Beach, CA,
  {USA}}, pages 4299--4307, 2017.

\bibitem{csiszar2012algorithms}
Villo Csisz{\'a}r.
\newblock Em algorithms for generalized bradley-terry models.
\newblock In {\em Annales Universitatis Scientiarum Budapestinensis de Rolando
  E{\"o}tv{\"o}s Nominatae (Sectio Computatorica)}, volume~36, pages 143--157,
  2012.

\bibitem{cui2024ultrafeedbackboostinglanguagemodels}
Ganqu Cui, Lifan Yuan, Ning Ding, Guanming Yao, Bingxiang He, Wei Zhu, Yuan Ni,
  Guotong Xie, Ruobing Xie, Yankai Lin, Zhiyuan Liu, and Maosong Sun.
\newblock Ultrafeedback: Boosting language models with scaled ai feedback,
  2024.

\bibitem{DBLP:conf/nips/DaiCWYCJ024}
Juntao Dai, Tianle Chen, Xuyao Wang, Ziran Yang, Taiye Chen, Jiaming Ji, and
  Yaodong Yang.
\newblock Safesora: Towards safety alignment of text2video generation via a
  human preference dataset.
\newblock In Amir Globersons, Lester Mackey, Danielle Belgrave, Angela Fan,
  Ulrich Paquet, Jakub~M. Tomczak, and Cheng Zhang, editors, {\em Advances in
  Neural Information Processing Systems 38: Annual Conference on Neural
  Information Processing Systems 2024, NeurIPS 2024, Vancouver, BC, Canada,
  December 10 - 15, 2024}, 2024.

\bibitem{fageotGeneralizedBradleyTerryModels2024}
Julien Fageot, Sadegh Farhadkhani, L{\^e}-Nguy{\^e}n Hoang, and Oscar
  Villemaud.
\newblock Generalized {{Bradley-Terry Models}} for {{Score Estimation}} from
  {{Paired Comparisons}}.
\newblock {\em Proceedings of the AAAI Conference on Artificial Intelligence},
  38(18):20379--20386, March 2024.

\bibitem{DBLP:conf/ijcai/GuoTKOCCEDI18}
Yuan Guo, Peng Tian, Jayashree Kalpathy{-}Cramer, Susan Ostmo, J.~Peter
  Campbell, Michael~F. Chiang, Deniz Erdogmus, Jennifer~G. Dy, and Stratis
  Ioannidis.
\newblock Experimental design under the bradley-terry model.
\newblock In J{\'{e}}r{\^{o}}me Lang, editor, {\em Proceedings of the
  Twenty-Seventh International Joint Conference on Artificial Intelligence,
  {IJCAI} 2018, July 13-19, 2018, Stockholm, Sweden}, pages 2198--2204.
  ijcai.org, 2018.

\bibitem{hadfield2019incomplete}
Dylan Hadfield-Menell and Gillian~K Hadfield.
\newblock Incomplete contracting and ai alignment.
\newblock In {\em Proceedings of the 2019 AAAI/ACM Conference on AI, Ethics,
  and Society}, pages 417--422, 2019.

\bibitem{ghanalawsuit}
Rachel Hall and Claire Wilmot.
\newblock Meta faces ghana lawsuits over impact of extreme content on
  moderators.
\newblock {\em {T}he {G}uardian}, 2025.

\bibitem{hao2023cleaning}
Karen Hao and Deepa Seetharaman.
\newblock Cleaning up chatgpt takes heavy toll on human workers.
\newblock {\em Wall Street Journal}, 24, 2023.

\bibitem{DBLP:journals/corr/abs-2310-13639}
Joey Hejna, Rafael Rafailov, Harshit Sikchi, Chelsea Finn, Scott Niekum,
  W.~Bradley Knox, and Dorsa Sadigh.
\newblock Contrastive preference learning: Learning from human feedback without
  {RL}.
\newblock {\em CoRR}, abs/2310.13639, 2023.

\bibitem{DBLP:conf/aaai/Hoang19}
L{\^{e}}~Nguy{\^{e}}n Hoang.
\newblock Towards robust end-to-end alignment.
\newblock In Hu{\'{a}}scar Espinoza, Se{\'{a}}n~{\'{O}} h{\'{E}}igeartaigh,
  Xiaowei Huang, Jos{\'{e}} Hern{\'{a}}ndez{-}Orallo, and Mauricio
  Castillo{-}Effen, editors, {\em Workshop on Artificial Intelligence Safety
  2019 co-located with the Thirty-Third {AAAI} Conference on Artificial
  Intelligence 2019 (AAAI-19), Honolulu, Hawaii, January 27, 2019}, volume 2301
  of {\em {CEUR} Workshop Proceedings}. CEUR-WS.org, 2019.

\bibitem{hoang2021tournesol}
L{\^e}-Nguy{\^e}n Hoang, Louis Faucon, Aidan Jungo, Sergei Volodin, Dalia
  Papuc, Orfeas Liossatos, Ben Crulis, Mariame Tighanimine, Isabela Constantin,
  Anastasiia Kucherenko, et~al.
\newblock Tournesol: A quest for a large, secure and trustworthy database of
  reliable human judgments.
\newblock {\em arXiv preprint arXiv:2107.07334}, 2021.

\bibitem{DW2025}
Stephanie H{ö}ppner.
\newblock Africa's content moderators want compensation for job trauma.
\newblock {\em Deutsche Welle}, 2025.

\bibitem{DBLP:journals/corr/abs-1911-11658}
Victor Kristof, Valentin Quelquejay{-}Lecl{\`{e}}re, Robin Zbinden, Lucas
  Maystre, Matthias Grossglauser, and Patrick Thiran.
\newblock A user study of perceived carbon footprint.
\newblock {\em CoRR}, abs/1911.11658, 2019.

\bibitem{DBLP:journals/corr/abs-2403-06735}
Adarsh~N. L, Arun~P. V., and Aravindh~N. L.
\newblock Enhancing image caption generation using reinforcement learning with
  human feedback.
\newblock {\em CoRR}, abs/2403.06735, 2024.

\bibitem{DBLP:journals/pacmhci/LeeKKKYCSNLPP19}
Min~Kyung Lee, Daniel Kusbit, Anson Kahng, Ji~Tae Kim, Xinran Yuan, Allissa
  Chan, Daniel See, Ritesh Noothigattu, Siheon Lee, Alexandros Psomas, and
  Ariel~D. Procaccia.
\newblock Webuildai: Participatory framework for algorithmic governance.
\newblock {\em Proc. {ACM} Hum. Comput. Interact.}, 3({CSCW}):181:1--181:35,
  2019.

\bibitem{DBLP:conf/cvpr/LiangHLLKCSPYYK24}
Youwei Liang, Junfeng He, Gang Li, Peizhao Li, Arseniy Klimovskiy, Nicholas
  Carolan, Jiao Sun, Jordi Pont{-}Tuset, Sarah Young, Feng Yang, Junjie Ke,
  Krishnamurthy~Dj Dvijotham, Katherine~M. Collins, Yiwen Luo, Yang Li, Kai~J.
  Kohlhoff, Deepak Ramachandran, and Vidhya Navalpakkam.
\newblock Rich human feedback for text-to-image generation.
\newblock In {\em {IEEE/CVF} Conference on Computer Vision and Pattern
  Recognition, {CVPR} 2024, Seattle, WA, USA, June 16-22, 2024}, pages
  19401--19411. {IEEE}, 2024.

\bibitem{DBLP:conf/nips/LiuZL0ZH24}
Kendong Liu, Zhiyu Zhu, Chuanhao Li, Hui Liu, Huanqiang Zeng, and Junhui Hou.
\newblock Prefpaint: Aligning image inpainting diffusion model with human
  preference.
\newblock In Amir Globersons, Lester Mackey, Danielle Belgrave, Angela Fan,
  Ulrich Paquet, Jakub~M. Tomczak, and Cheng Zhang, editors, {\em Advances in
  Neural Information Processing Systems 38: Annual Conference on Neural
  Information Processing Systems 2024, NeurIPS 2024, Vancouver, BC, Canada,
  December 10 - 15, 2024}, 2024.

\bibitem{DBLP:conf/nips/LiuLZ0GYB024}
Zhihan Liu, Miao Lu, Shenao Zhang, Boyi Liu, Hongyi Guo, Yingxiang Yang,
  Jose~H. Blanchet, and Zhaoran Wang.
\newblock Provably mitigating overoptimization in {RLHF:} your {SFT} loss is
  implicitly an adversarial regularizer.
\newblock In Amir Globersons, Lester Mackey, Danielle Belgrave, Angela Fan,
  Ulrich Paquet, Jakub~M. Tomczak, and Cheng Zhang, editors, {\em Advances in
  Neural Information Processing Systems 38: Annual Conference on Neural
  Information Processing Systems 2024, NeurIPS 2024, Vancouver, BC, Canada,
  December 10 - 15, 2024}, 2024.

\bibitem{DBLP:conf/nips/0001HFCFSL24}
Chris Lu, Samuel Holt, Claudio Fanconi, Alex~J. Chan, Jakob~N. Foerster,
  Mihaela van~der Schaar, and Robert~T. Lange.
\newblock Discovering preference optimization algorithms with and for large
  language models.
\newblock In Amir Globersons, Lester Mackey, Danielle Belgrave, Angela Fan,
  Ulrich Paquet, Jakub~M. Tomczak, and Cheng Zhang, editors, {\em Advances in
  Neural Information Processing Systems 38: Annual Conference on Neural
  Information Processing Systems 2024, NeurIPS 2024, Vancouver, BC, Canada,
  December 10 - 15, 2024}, 2024.

\bibitem{luce1959individual}
R~Duncan Luce et~al.
\newblock {\em Individual choice behavior}, volume~4.
\newblock Wiley New York, 1959.

\bibitem{DBLP:conf/nips/0001X024}
Yu~Meng, Mengzhou Xia, and Danqi Chen.
\newblock Simpo: Simple preference optimization with a reference-free reward.
\newblock In Amir Globersons, Lester Mackey, Danielle Belgrave, Angela Fan,
  Ulrich Paquet, Jakub~M. Tomczak, and Cheng Zhang, editors, {\em Advances in
  Neural Information Processing Systems 38: Annual Conference on Neural
  Information Processing Systems 2024, NeurIPS 2024, Vancouver, BC, Canada,
  December 10 - 15, 2024}, 2024.

\bibitem{DBLP:journals/nca/MenkeM08}
Joshua~E. Menke and Tony~R. Martinez.
\newblock A bradley-terry artificial neural network model for individual
  ratings in group competitions.
\newblock {\em Neural Comput. Appl.}, 17(2):175--186, 2008.

\bibitem{DBLP:conf/aaai/NoothigattuGADR18}
Ritesh Noothigattu, Snehalkumar (Neil)~S. Gaikwad, Edmond Awad, Sohan Dsouza,
  Iyad Rahwan, Pradeep Ravikumar, and Ariel~D. Procaccia.
\newblock A voting-based system for ethical decision making.
\newblock In Sheila~A. McIlraith and Kilian~Q. Weinberger, editors, {\em
  Proceedings of the Thirty-Second {AAAI} Conference on Artificial
  Intelligence, (AAAI-18), the 30th innovative Applications of Artificial
  Intelligence (IAAI-18), and the 8th {AAAI} Symposium on Educational Advances
  in Artificial Intelligence (EAAI-18), New Orleans, Louisiana, USA, February
  2-7, 2018}, pages 1587--1594. {AAAI} Press, 2018.

\bibitem{DBLP:journals/corr/abs-2402-13228}
Arka Pal, Deep Karkhanis, Samuel Dooley, Manley Roberts, Siddartha Naidu, and
  Colin White.
\newblock Smaug: Fixing failure modes of preference optimisation with
  dpo-positive.
\newblock {\em CoRR}, abs/2402.13228, 2024.

\bibitem{DBLP:conf/nips/PangYHCSW24}
Richard~Yuanzhe Pang, Weizhe Yuan, He~He, Kyunghyun Cho, Sainbayar Sukhbaatar,
  and Jason Weston.
\newblock Iterative reasoning preference optimization.
\newblock In Amir Globersons, Lester Mackey, Danielle Belgrave, Angela Fan,
  Ulrich Paquet, Jakub~M. Tomczak, and Cheng Zhang, editors, {\em Advances in
  Neural Information Processing Systems 38: Annual Conference on Neural
  Information Processing Systems 2024, NeurIPS 2024, Vancouver, BC, Canada,
  December 10 - 15, 2024}, 2024.

\bibitem{perrigo2023exclusive}
Billy Perrigo.
\newblock Openai used kenyan workers on less than \$2 per hour to make chatgpt
  less toxic.
\newblock {\em Time Magazine}, 18:2023, 2023.

\bibitem{plackett1975analysis}
Robin~L Plackett.
\newblock The analysis of permutations.
\newblock {\em Journal of the Royal Statistical Society Series C: Applied
  Statistics}, 24(2):193--202, 1975.

\bibitem{dpo}
Rafael Rafailov, Archit Sharma, Eric Mitchell, Christopher~D Manning, Stefano
  Ermon, and Chelsea Finn.
\newblock Direct preference optimization: Your language model is secretly a
  reward model.
\newblock In A.~Oh, T.~Naumann, A.~Globerson, K.~Saenko, M.~Hardt, and
  S.~Levine, editors, {\em Advances in Neural Information Processing Systems},
  volume~36, pages 53728--53741. Curran Associates, Inc., 2023.

\bibitem{DBLP:journals/corr/abs-2410-08847}
Noam Razin, Sadhika Malladi, Adithya Bhaskar, Danqi Chen, Sanjeev Arora, and
  Boris Hanin.
\newblock Unintentional unalignment: Likelihood displacement in direct
  preference optimization.
\newblock {\em CoRR}, abs/2410.08847, 2024.

\bibitem{DBLP:conf/nips/StiennonO0ZLVRA20}
Nisan Stiennon, Long Ouyang, Jeffrey Wu, Daniel~M. Ziegler, Ryan Lowe, Chelsea
  Voss, Alec Radford, Dario Amodei, and Paul~F. Christiano.
\newblock Learning to summarize with human feedback.
\newblock In Hugo Larochelle, Marc'Aurelio Ranzato, Raia Hadsell,
  Maria{-}Florina Balcan, and Hsuan{-}Tien Lin, editors, {\em Advances in
  Neural Information Processing Systems 33: Annual Conference on Neural
  Information Processing Systems 2020, NeurIPS 2020, December 6-12, 2020,
  virtual}, 2020.

\bibitem{DBLP:conf/icml/TangGZCMRRVPP24}
Yunhao Tang, Zhaohan~Daniel Guo, Zeyu Zheng, Daniele Calandriello, R{\'{e}}mi
  Munos, Mark Rowland, Pierre~Harvey Richemond, Michal Valko,
  Bernardo~{\'{A}}vila Pires, and Bilal Piot.
\newblock Generalized preference optimization: {A} unified approach to offline
  alignment.
\newblock In {\em Forty-first International Conference on Machine Learning,
  {ICML} 2024, Vienna, Austria, July 21-27, 2024}. OpenReview.net, 2024.

\bibitem{thurstone27}
Louis~Leon Thurstone.
\newblock A law of comparative judgment.
\newblock {\em Psychological Review}, 34(4):273--286, 1927.

\bibitem{torchtune}
torchtune maintainers and contributors.
\newblock torchtune: Pytorch's finetuning library, April 2024.

\bibitem{vaswani2017attention}
Ashish Vaswani, Noam Shazeer, Niki Parmar, Jakob Uszkoreit, Llion Jones,
  Aidan~N Gomez, {\L}ukasz Kaiser, and Illia Polosukhin.
\newblock Attention is all you need.
\newblock {\em Advances in neural information processing systems}, 30, 2017.

\bibitem{DBLP:conf/nips/XiaoYZLH24}
Teng Xiao, Yige Yuan, Huaisheng Zhu, Mingxiao Li, and Vasant~G. Honavar.
\newblock Cal-dpo: Calibrated direct preference optimization for language model
  alignment.
\newblock In Amir Globersons, Lester Mackey, Danielle Belgrave, Angela Fan,
  Ulrich Paquet, Jakub~M. Tomczak, and Cheng Zhang, editors, {\em Advances in
  Neural Information Processing Systems 38: Annual Conference on Neural
  Information Processing Systems 2024, NeurIPS 2024, Vancouver, BC, Canada,
  December 10 - 15, 2024}, 2024.

\bibitem{zermelo1929berechnung}
Ernst Zermelo.
\newblock Die berechnung der turnier-ergebnisse als ein maximumproblem der
  wahrscheinlichkeitsrechnung.
\newblock {\em Mathematische Zeitschrift}, 29(1):436--460, 1929.

\bibitem{DBLP:conf/nips/ZhangLLZWZQ24}
Dong Zhang, Zhaowei Li, Shimin Li, Xin Zhang, Pengyu Wang, Yaqian Zhou, and
  Xipeng Qiu.
\newblock Speechalign: Aligning speech generation to human preferences.
\newblock In Amir Globersons, Lester Mackey, Danielle Belgrave, Angela Fan,
  Ulrich Paquet, Jakub~M. Tomczak, and Cheng Zhang, editors, {\em Advances in
  Neural Information Processing Systems 38: Annual Conference on Neural
  Information Processing Systems 2024, NeurIPS 2024, Vancouver, BC, Canada,
  December 10 - 15, 2024}, 2024.

\bibitem{zhao2016deep}
Piplong Zhao, Ou~Wu, Liyuan Guo, Weiming Hu, and Jinfeng Yang.
\newblock Deep learning-based learning to rank with ties for image re-ranking.
\newblock In {\em 2016 IEEE International Conference on Digital Signal
  Processing (DSP)}, pages 452--456. IEEE, 2016.

\bibitem{zhao2023slic}
Yao Zhao, Rishabh Joshi, Tianqi Liu, Misha Khalman, Mohammad Saleh, and Peter~J
  Liu.
\newblock Slic-hf: Sequence likelihood calibration with human feedback.
\newblock {\em arXiv preprint arXiv:2305.10425}, 2023.

\end{thebibliography}

\newpage

\newpage
\appendix
\begin{center}
    {\Large \bf Supplemental material}
\end{center}
\section{Proofs of pairwise monotonicity for unequivocal comparisons}
\label{app:unequivocal}

\begin{proof}[Proof of Theorem~\ref{th:monotonicity_unequivocal}]
  Denote $\bD^\varepsilon \triangleq \bD \cup \varepsilon \set{(x, y, z, \max C)}$.
  We invoke the implicit function theorem 
  for the map $\Phi :\setR^{D+1} \to \setR^D, (\varepsilon, \theta) \mapsto \nabla_\theta \Loss (\theta | \bD^\varepsilon)$.
  Since $\nabla_{\theta} \Loss(\theta^* | \bD) = 0$, 
  we know that $\Phi(0, \theta^*) = 0$.
  The Jacobian matrix of $\Phi$ relative to $\theta$ is given by
  \begin{align}
    J_{\theta} \Phi (\varepsilon, \theta) = \nabla^2 \Loss (\theta | \bD^\varepsilon ).
  \end{align}
  
  Since we assumed $\nabla^2 \Loss (\theta | \bD )$ to be definite-positive, 
  we know it to be invertible.
  The implicit functions theorem thus applies, and provides the existence of $\varepsilon_0 > 0$ and a unique function $g: (-\varepsilon_0, \varepsilon_0) \to \setR^D$
  such that $g(0) = \theta^*$ and $\Phi(\varepsilon, g(\varepsilon)) = 0$ for all $\varepsilon \in (-\varepsilon_0, \varepsilon_0)$.
  Moreover, $g$ is differentiable and 
  \begin{align}
    g'(0) 
    &= - \left[ \partial_{\varepsilon} J_{\theta} \Phi(0, \theta^*) \right]^{-1} 
      \partial_\varepsilon \Phi (0, \theta^*) 
      = - \left[ \nabla^2 \Loss (\theta^* | \bD) \right]^{-1} 
      \partial_\varepsilon \nabla \Loss (\theta^* | \bD^\varepsilon)_{|\varepsilon = 0}
  \end{align}
  Now consider any $(x, y, z) \in \cB \times \cA \times \cA$,
  and define $\bD^\varepsilon \triangleq $
  \begin{align}
    \Loss (\theta | \bD^\varepsilon )
    &= \Loss (\theta | \bD) + \varepsilon \ell (s_{yz|x} (\theta), \max \cC).
  \end{align}
  It implies
  \begin{align}
    \nabla_\theta \Loss (\theta | \bD^\varepsilon )
    &= \nabla_\theta \Loss (\theta | \bD) 
      + \varepsilon \partial_s \ell(s_{yz|x} (\theta), \max \cC) \nabla_\theta s_{yz|x} (\theta).
  \end{align}
  Thus 
  \begin{align}
    \partial_\varepsilon \nabla_{\theta} \Loss (\theta | \bD^\varepsilon)_{|\varepsilon = 0}
    &= \partial_s \ell(s_{yz|x} (\theta), \max \cC) \nabla_\theta s_{yz|x} (\theta).
  \end{align}
  But by Assumption~\ref{ass:max_comparison}, 
  we know that $\partial_s \ell(s_{yz|x} (\theta), \max \cC) < 0$.
  In particular, we then have
  \begin{align}
    g'(0) 
    &= \alpha \left[ \nabla^2 \Loss (\theta^* | \bD) \right]^{-1} 
      \nabla_\theta s_{yz|x} (\theta^*),
  \end{align}
  where $\alpha = - \partial_s \ell(s_{yz|x} (\theta^*), \max \cC) > 0$.
  In particular, this implies that
  \begin{align}
    s_{yz|x} (\theta^\varepsilon) - s_{yz|x} (\theta^*)
    &= s_{yz|x} (g(\varepsilon)) - s_{yz|x} (g(0)) \\
    &= s_{yz|x} (g(0) + \varepsilon g'(0) + o(\varepsilon)) - s_{yz|x} (g(0)) \\
    &= \nabla_\theta s_{yz|x} (\theta^*)^T g'(0) \varepsilon + o(\varepsilon) \\
    &= \varepsilon \alpha \nabla_\theta s_{yz|x} (\theta^*)^T \left[ \nabla^2 \Loss (\theta^* | \bD) \right]^{-1} 
      \nabla_\theta s_{yz|x} (\theta^*) + o(\varepsilon),
      \label{eq:score_difference_first_order_approximation}
  \end{align}
  where we used the assumption that $s_{yz|x}$ was a differentiable function of $\theta$.
  We use again the fact that the Hessian matrix is definite positive, 
  along with the assumption that $\nabla_\theta s_{yz|x} (\theta^*) \neq 0$,
  which implies
  \begin{align}
    \beta \triangleq \alpha \nabla_\theta s_{yz|x} (\theta^*)^T \left[ \nabla^2 \Loss (\theta^* | \bD) \right]^{-1} 
    \nabla_\theta s_{yz|x} (\theta^*) > 0.
  \end{align}
  At last, we obtain $s_{yz|x} (\theta^\varepsilon) - s_{yz|x} (\theta^*) = \beta \varepsilon + o(\varepsilon)$
  with $\beta > 0$.
  Thus locally, up to redefining $\varepsilon_0$,
  we know that the score difference between $z$ and $y$ given $x$ strictly increases,
  as we add a small comparison intensification in favor of $z$.
\end{proof}

\section{Proofs of pairwise monotonicity for unequivocal comparisons}
\label{app:intensification}

\begin{proof}[Proof of Theorem~\ref{th:monotonicity_comparison_intensification}]
    The proof is very similar to the proof of Theorem~\ref{th:monotonicity_unequivocal},
    by now defining $\bD^\varepsilon \triangleq \bD + \Delta^{\varepsilon}_{y z|x}$
    We invoke the implicit function theorem 
    for the map $f : (\varepsilon, \theta) \mapsto \nabla_\theta \Loss (\theta | \bD^{\varepsilon} )$,
    which is a function $\setR^{1 + D} \to \setR^D$.
    Since $\nabla \Loss(\theta^*, \bD) = 0$, 
    we know that $f(0, \theta^*) = 0$.
    Note that its Jacobian matrix restricted to $\theta$ is given by
    \begin{align}
        J_{|\theta} (\varepsilon, \theta) 
        = \left[ \partial_{\theta_j} \partial_{\theta_i} \Loss (\theta | \bD^{\varepsilon} ) \right]_{i,j \in [D]},
    \end{align}
    which is exactly the Hessian matrix $\nabla^2 \Loss (\theta | \bD^{\varepsilon} )$.
    Since we assumed it to be positive-definite, 
    we know it to be invertible.
    Hence there exists $\varepsilon_0 > 0$ and a unique function $g: (-\varepsilon_0, \varepsilon_0) \to \setR^D$
    such that $g(0) = \theta^*$ and $f(\varepsilon, g(\varepsilon)) = 0$ for all $\varepsilon \in (-\varepsilon_0, \varepsilon_0)$.
    Moreover, $g$ is differentiable and 
    \begin{align}
        g'(0) 
        &= - \left[ \partial_{\varepsilon} J_{|\theta}(0, \theta^*) \right]^{-1} 
            \partial_\varepsilon f (0, \theta^*) 
        = - \left[ \nabla^2 \Loss (\theta^* | \bD) \right]^{-1} 
            \partial_\varepsilon \nabla \Loss (\theta^* | \bD^\varepsilon)_{|\varepsilon = 0}
    \end{align}
    Now assume also that $(x, y, z)$ appears exactly once in $\bD$.
    This can be done without loss of generality.
    Indeed, if it never appears, then the loss is unperturbed.
    If it appears multiple times, it suffices to add all the variations due to each appearance.
    Now, given $(x, y, z)$ appearing once in $\bD$, we have
    \begin{align}
        \Loss (\theta | \bD^{\varepsilon} )
        &= \Loss (\theta | \bD) + \left(
            \ell (s_{yz|x} (\theta), c + \varepsilon)
            - \ell (s_{yz|x} (\theta), c)
        \right).
    \end{align}
    It implies
    \begin{align}
        \nabla_\theta \Loss (\theta | \bD^{\varepsilon} )
        &= \nabla_\theta \Loss (\theta | \bD) + \left( 
            \partial_s \ell(s_{yz|x} (\theta), c + \varepsilon) 
            - \partial_s \ell(s_{yz|x} (\theta), c) 
        \right) \nabla_\theta s_{yz|x} (\theta).
    \end{align}
    Thus 
    \begin{align}
        \partial_\varepsilon \nabla_\theta \Loss (\theta | \bD^{\varepsilon})_{|\varepsilon = 0}
        &= \partial_c \partial_s \ell(s_{yz|x} (\theta), c) \nabla_\theta s_{yz|x} (\theta).
    \end{align}
    But by Assumption~\ref{ass:differentiable}, 
    we know that $\partial_c \partial_s \ell(s_{yz|x} (\theta), c) < 0$.
    In particular, we then have
    \begin{align} \label{eq:gprime0}
        g'(0) 
        &= \alpha \left[ \nabla^2 \Loss (\theta^* | \bD) \right]^{-1} 
            \nabla_\theta s_{yz|x} (\theta^*),
    \end{align}
    where $\alpha = - \partial_c \partial_s \ell(s_{yz|x} (\theta^*), c) > 0$
    In particular, this implies that
    \begin{align}
        s_{yz|x} (\theta^\varepsilon) - s_{yz|x} (\theta^*)
        &= s_{yz|x} (g(\varepsilon)) - s_{yz|x} (g(0)) 
        = s_{yz|x} (g(0) + \varepsilon g'(0) + o(\varepsilon)) - s_{yz|x} (g(0)) \\
        &= \nabla_\theta s_{yz|x} (\theta^*)^T g'(0) \varepsilon + o(\varepsilon) \\
        &= \varepsilon \alpha \nabla_\theta s_{yz|x} (\theta^*)^T \left[ \nabla^2 \Loss (\theta^* | \bD) \right]^{-1} 
            \nabla_\theta s_{yz|x} (\theta^*) + o(\varepsilon),
    \end{align}
    where we used the assumption that $s_{yz|x}$ was a differentiable function of $\theta$.
    We use again the fact that the Hessian matrix is definite positive, which implies
    \begin{align}
        \beta \triangleq \alpha \nabla_\theta s_{yz|x} (\theta^*)^T \left[ \nabla^2 \Loss (\theta^* | \bD) \right]^{-1} 
            \nabla_\theta s_{yz|x} (\theta^*) > 0.
    \end{align}
    At last, we obtain $s_{yz|x} (\theta^\varepsilon) - s_{yz|x} (\theta^*) = \beta \varepsilon + o(\varepsilon)$
    with $\beta > 0$.
    Thus locally, up to redefining $\varepsilon_0$,
    we know that the score difference between $z$ and $y$ given $x$ strictly increases,
    as we add a small comparison intensification in favor of $z$.
\end{proof}

\section{Global pairwise monotonicity for convex loss}
\label{app:strongly_convex}

\begin{proof}[Proof of Theorem~\ref{th:global}]
    Make Assumptions~\ref{ass:s_c1}, \ref{ass:max_comparison} and \ref{ass:strongly_convex},
    and let us focus on the first part of Theorem~\ref{th:global}.
    The latter part can be derived similarly.

    By strong convexity of the loss (Assumption~\ref{ass:strongly_convex}), 
    not only is the minimum $\theta^* (\bD)$ unique for all datasets $\bD$,
    the Hessian matrix $\nabla^2 \Loss(\theta^* (\bD) | \bD)$ 
    is also guaranteed to be definite positive.
    
    Now suppose that $\bD'$ is obtained from $\bD$ by $N$ operations,
    which are all either an addition of an unequivocal comparison to
    or a comparison intensification favors $y$ against $z$ under $x$.
    Denote $\bD_n$ the state of $\bD$ after the first $n$ operations.
    We define $f : [0, 1] \to \cD$ as follows.
    For $n \in \set{0, 1, \ldots, N-1}$ and $t \in [0, 1/N)$,
    we define $f(n/N + t) \triangleq \bD_n \cup (tN) \set{(x, y, z, \max \cC)}$.

    By Theorem~\ref{th:monotonicity_unequivocal},
    we know that $s_{yz|x} (\theta^*(f(t)))$ is locally nondecreasing for all $t \in [0, 1]$.
    More precisely, from its proof and especially~\eqref{eq:score_difference_first_order_approximation}, 
    we derive the fact that
    $s_{yz|x} (\theta^*(f(t)))$ is differentiable for all $t \in [0, 1]$ and that
    $\frac{d}{dt} s_{yz|x} (\theta^*(f(t))) \geq 0$ 
    (even if $\nabla_\theta s_{yz|x} (\theta^* (f(t))) = 0$).
    It follows that 
    \begin{align}
        0 &\leq \int_0^1 \frac{d}{dt} \left[ s_{yz|x} (\theta^*(f(t))) \right] dt \\
        &= s_{yz|x} (\theta^*(f(1))) - s_{yz|x} (\theta^*(f(0))) \\
        &= s_{yz|x} (\theta^*(\bD')) - s_{yz|x} (\theta^*(\bD)).
    \end{align}
    Rearranging the terms allows to conclude.
\end{proof}

\section{Proof of local individual score monotonicity}
\label{app:individual_score_monotonicity}

\begin{proof}[Proof of Theorem~\ref{th:individual_score_monotonicity}]
    The proof is very similar to the one of Theorem \ref{th:monotonicity_comparison_intensification}. Starting from \eqref{eq:gprime0}, we have then
        \begin{align}
        s_{z|x} (\theta^\varepsilon) - s_{z|x} (\theta^*)
        &= s_{z|x} (g(\varepsilon)) - s_{z|x} (g(0))  \\
        &= \nabla_\theta s_{z|x} (\theta^*)^T g'(0) \varepsilon + o(\varepsilon) \\
        &= \varepsilon \alpha \nabla_\theta s_{z|x} (\theta^*)^T \left[ \nabla^2 \Loss (\theta^* | \bD) \right]^{-1} 
            \nabla_\theta s_{zy|x} (\theta^*) + o(\varepsilon) \\
        &= \varepsilon \alpha e_z \nabla_\theta s_{|x} (\theta^*)^T \left[ \nabla^2 \Loss (\theta^* | \bD) \right]^{-1} 
            \nabla_\theta s_{|x} (\theta^*) e_{zy} + o(\varepsilon)
    \end{align}
    where the $e_z$ are elements of the canonical basis of $\mathbb{R}^D$.
    Finally, setting
    \begin{align}
        \beta \triangleq \alpha \nabla_\theta s_{z|x} (\theta^*)^T \left[ \nabla^2 \Loss (\theta^* | \bD) \right]^{-1} 
            \nabla_\theta s_{zy|x} (\theta^*)
    \end{align}
    and using the max-diagonal dominance of $ \nabla_\theta s_{|x} (\theta^*)^T \left[ \nabla^2 \Loss (\theta^* | \bD) \right]^{-1} 
            \nabla_\theta s_{|x} (\theta^*)$, we deduce that $\beta > 0$. This leads to $s_{z|x}(\theta^\epsilon) - s_{zy|x}(\theta^*) = \beta \epsilon + o(\epsilon)$ with $\beta > 0$. This allows to conclude similarly to Theorem \ref{th:monotonicity_comparison_intensification}.
\end{proof}

\section{Proof that fully-pairwise monotonicity implies individual-probability monotonicity}
\label{app:pairwisetoindividual}

\begin{proof}[Proof of Proposition~\ref{prop:pairwisetoindividual}]
Assuming probabilities are softmax functions of the scores, the implication follows from the fact that
\begin{align}
    \pi_\theta (y|x) 
    \triangleq \frac{\exp s_{y|x} (\theta)}{\sum_w \exp s_{w|x} (\theta)}
    = \frac{1}{1 + \sum_{w \neq y} \exp \left( - s_{yw|x} (\theta) \right)},
\end{align}
which is an increasing function of the $s_{yw|x}$'s, for $w \in \cA$.

Hence, $\pi_\theta(y|x)$ inherits the fully pairwise monotonicity of the scores and we have $\pi_{\theta^\epsilon}(y | x) \ge \pi_{\theta}(y | x)$. The proof for $z$ is similar.
\end{proof}

\section{Proof that GBT is fully-pairwise monotone}
\label{app:GBTfullypairwise}

The proof of Proposition \ref{prop:GBTfullypairwise} relies on the following result for diagonally dominant matrices.

\begin{lemma}
\label{lem:MNlemma}
   Let $M$ be a symmetric and strictly diagonally dominant matrix
   (i.e. $|M_{yy}| > \sum_{z \neq y} |M_{yz}|$ for any $y$)
   such that $M_{yy} > 0$ and $M_{yz} \leq 0$ for any $y \neq z$. 
   Then, its inverse $N$ satisfies
    \begin{align}\label{eq:propMfully}
        N_{yy} - N_{yz} \geq N_{wy} - N_{wz}
    \end{align}
    for any $y,z,w \in \mathcal{A}$.
\end{lemma}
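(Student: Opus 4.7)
The plan is to recast the inequality~\eqref{eq:propMfully} as a statement about the argmax of a single vector, and then invoke a discrete maximum principle for Stieltjes matrices. Concretely, set $v := N(e_y - e_z) \in \setR^{\cA}$. Since $M$ is symmetric, so is $N$, and therefore $v_u = N_{uy} - N_{uz}$ for every $u$. The statement to prove becomes $v_y \geq v_w$ for every $w$, i.e.\ $v$ attains its maximum at coordinate $y$ (ties allowed). The case $y=z$ is trivial, so I assume $y \neq z$ in what follows.

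By construction $Mv = e_y - e_z$, so the components of $Mv$ equal $1$ at $y$, $-1$ at $z$, and $0$ elsewhere. The next step is the classical max-principle computation: for any index $w^\star$ at which $v$ attains its maximum, using $M_{w^\star u} \leq 0$ for $u \neq w^\star$ and $v_u \leq v_{w^\star}$, one obtains
\begin{align}
    (Mv)_{w^\star} \;\geq\; v_{w^\star}\Bigl( M_{w^\star w^\star} + \sum_{u \neq w^\star} M_{w^\star u}\Bigr) \;=\; r_{w^\star}\, v_{w^\star},
\end{align}
where $r_{u} := \sum_{u'} M_{u u'}$ is the $u$-th row sum of $M$. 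By the strict diagonal dominance hypothesis combined with the sign pattern ($M_{uu}>0$, $M_{uu'}\le 0$ for $u'\ne u$), each row sum satisfies $r_u > 0$.

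From here a short three-way case analysis finishes the proof. If $w^\star = y$, the conclusion is immediate. If $w^\star \notin \{y,z\}$, the bound forces $0 = (Mv)_{w^\star} \geq r_{w^\star} v_{w^\star}$, so $v_{w^\star} \leq 0$ and hence $v \leq 0$ entrywise; combined with the identity $\sum_u r_u v_u = \mathbf{1}^T M v = \mathbf{1}^T (e_y - e_z) = 0$ and the strict positivity of the $r_u$'s, this forces $v = 0$, contradicting the invertibility of $N$ and the assumption $y\ne z$. Finally, if $w^\star = z$, the bound gives $-1 = (Mv)_z \geq r_z v_z$, so $v_z \leq -1/r_z < 0$; since $v_z$ is the maximum, every $v_u$ is strictly negative, again contradicting $\sum_u r_u v_u = 0$. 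The main subtlety is ensuring strict positivity of the row sums $r_u$ in the last two cases: this is precisely where strict (rather than weak) diagonal dominance is consumed, and without it the argument would at best yield a non-strict comparison that is unable to exclude the degenerate maximizers $w^\star = z$ or $w^\star \notin \{y,z\}$.
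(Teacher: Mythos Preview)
Your proof is correct, and its core engine---the discrete maximum principle inequality $(Mv)_{w^\star} \geq r_{w^\star} v_{w^\star}$ at an argmax $w^\star$---is the same as the paper's. The paper packages it slightly differently: it first proves the auxiliary claim ``if $\max_u a_u > 0$ and $w^\star = \argmax_u a_u$, then $(Ma)_{w^\star} > 0$'', and then applies it to $a = N(e_y - e_z)$, concluding $w^\star = y$ because $y$ is the only coordinate where $Ma = e_y - e_z$ is positive. To get the hypothesis $\max_u a_u > 0$ off the ground, the paper invokes an external lemma (\cite[Lemma~1]{fageotGeneralizedBradleyTerryModels2024}) ensuring $N_{yy} - N_{yz} > 0$. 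Your route avoids this dependency: instead of first establishing that the maximum is positive, you run a three-way case analysis on the location of $w^\star$ and use the row-sum identity $\sum_u r_u v_u = \mathbf{1}^T M v = \mathbf{1}^T(e_y - e_z) = 0$ (valid because $M$ is symmetric, so $\mathbf{1}^T M$ is the vector of row sums) together with $r_u > 0$ to force $v = 0$ in the bad cases, contradicting invertibility. This makes your argument self-contained and in fact yields the slightly stronger conclusion that $y$ is the \emph{unique} argmax when $y\neq z$; the price is one extra trick (the row-sum identity) that the paper's version does not need since it outsources the positivity step.
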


\begin{proof}
        We first prove the following result. Assume that $a$ is a vector such that $\max_v a_v > 0$ and denote $w = \arg \max_v a_v$ so that $a_w > 0$. Then, the vector $b = M a$ is such that $b_w > 0$.
        Assume by contradiction that $b_w \leq 0$. Then, we have 
        \begin{align}
            M_{ww} a_w = - \sum_v M_{wv} a_v + b_w \leq  - \sum_v M_{wv} a_v.
        \end{align}
        However, we also have
        \begin{align}
            \sum_v (-M_{wv}) a_v \leq a_w \sum_v (-M_{wv}) < a_w M_{ww}
        \end{align}
        by strict diagonal dominance and using that $a_v \leq a_w$ for any $v$ and $-M_{wv}>0$. The two inequalities are contradictory, hence $b_w > 0$. \\

        We apply this result to $a = N_{y} - N_z$, the difference of the two columns $N_y$ and $N_z$ of $N$. The latter being the inverse of $M$, we have $M a = b = e_{yz}$ where the $e_y$ are the element of the canonical basis. First, we observe that $a_y = N_{yy} - N_{yz} > 0$ due to \cite[Lemma 1]{fageotGeneralizedBradleyTerryModels2024}. Since $y$ is the only index $w$ for which $b_w = 1 > 0$, we deduce from the previous result that $y = \arg \max_w a_w = \arg\max_w N_{wy} - N_{wz}$, which gives precisely \eqref{eq:propMfully}.
\end{proof}

\begin{proof}[Proof of Proposition \ref{prop:GBTfullypairwise}]
We can follow the proof of \cite[Theorem 2]{fageotGeneralizedBradleyTerryModels2024}
and use Lemma \ref{lem:MNlemma} instead of \cite[Lemma 1]{fageotGeneralizedBradleyTerryModels2024} to conclude.    
\end{proof}

\section{Gradient descent monotonicity}
\label{app:gradient_descent}

\begin{proof}[Proof of Theorem~\ref{th:gradient_descent}]
In this section, we assume $\cR = 0$,
and we consider the impact of sampling $(x, y, z, \max \cC)$
and of performing an infinitesimal stochastic gradient step 
with respect to this sample.
More specifically, consider any solution $\theta \in \setR^D$.
The infinitesimal stochastic gradient step then yields
\begin{align}
    \theta(t+dt)
    &= \theta(t) - \nabla_\theta \left[ \ell (s_{yz|x} (\theta), \max \cC) \right] dt,
\end{align}
which we can rewrite
\begin{align}
    \frac{d}{dt} \theta 
    &= - \nabla_\theta \left[ \ell (s_{yz|x} (\theta), \max \cC) \right] 
    = \alpha \nabla s_{yz|x},
\end{align}
with $\alpha \triangleq - \partial_s \ell (s_{yz|x} (\theta), \max \cC) > 0$.
We then have
\begin{align}
    \frac{d}{dt} s_{yz|x}
    &= \nabla s_{yz|x} \cdot \frac{d\theta}{dt}
    = \alpha \norm{\nabla s_{yz|x} (\theta)}{2}^2, \\
    \frac{d}{dt} s_{y|x}
    &= \nabla s_{y|x} \cdot \frac{d\theta}{dt} 
    = \alpha \left( 
       \norm{\nabla s_{y|x} (\theta)}{2}^2 
       - \nabla s_{y|x} (\theta) \cdot \nabla s_{z|x} (\theta)
    \right), \\
    \frac{d}{dt} s_{yw|x}
    &= \nabla s_{yw|x} \cdot \frac{d\theta}{dt} 
    = \alpha \left( 
       \nabla s_{yw|x} (\theta) \cdot \nabla s_{yz|x} (\theta)
    \right).
\end{align}
Respectively, if the right-hand sides are strictly positive, 
then gradient-descent monotonicity holds, 
respectively for pairwise, individual-score and 
fully-pairwise monotonicity.
\end{proof}

\end{document}